\documentclass[12pt]{article}

\usepackage{mathrsfs}
\usepackage{amsmath}
\usepackage{amsfonts}
\usepackage{amssymb}
\usepackage{verbatim}
\usepackage{amsmath, amsthm, amsfonts, amssymb}
\setlength{\topmargin}{0cm} \setlength{\oddsidemargin}{0cm}
\setlength{\evensidemargin}{0cm} \setlength{\textwidth}{17truecm}
\setlength{\textheight}{21.0truecm}

\usepackage{nicefrac}

\usepackage[usenames, dvipsnames]{color}
\usepackage{latexsym}
\usepackage{MnSymbol}
\usepackage{enumerate}

\newcommand{\N}{{\mathbb N}}
\newcommand{\R}{{\mathbb R}}
\newcommand{\C}{{\mathbb C}}

\newcommand{\abs}[1]{|#1|}
\newcommand{\Abs}[1]{\big|#1\big|}
\newcommand{\norm}[1]{\|#1\|}
\newcommand{\half}{{\textstyle \frac{1}{2}}}
\newcommand{\vol}{{\operatorname{Vol}}}

\newcommand{\supp}{{\operatorname{Supp\,}}}

\newcommand{\ccal}{\mathcal{C}}
\newcommand{\dcal}{\mathcal{D}}

\newcommand{\gcal}{\mathcal{G}}
\newcommand{\hcal}{\mathcal{H}}

\newcommand{\ocal}{\mathcal{O}}

\newcommand{\scal}{\mathcal{S}}

\def \-{\bar}

\newcommand{\kahler}{K\"{a}hler}
\newcommand{\szego}{Szeg\"{o}}

\newcommand{\al}{\alpha}
\newcommand{\be}{\beta}
\newcommand{\ep}{\varepsilon}
\newcommand{\de}{\delta}
\newcommand{\De}{\Delta}
\newcommand{\ga}{\gamma}
\newcommand{\Ga}{\Gamma}
\newcommand{\la}{\lambda}
\newcommand{\La}{\Lambda}
\newcommand{\om}{\omega}
\newcommand{\Om}{\Omega}
\newcommand{\si}{\sigma}

\newcommand{\pa}{\partial}
\newcommand{\di}{\displaystyle}

\newtheorem{theorem}{Theorem}[section]
\newtheorem*{thm}{Theorem}
\newtheorem{cor}[theorem]{Corollary}
\newtheorem{lem}[theorem]{Lemma}

\newtheorem{rmk}[theorem]{Remark}
\newtheorem{prop}[theorem]{Proposition}
\newtheorem{question}[theorem]{Question}
\newtheorem{definition}[theorem]{Definition}

\newenvironment{example}{\medskip\noindent{\it Example:\/}}{\medskip}
\newenvironment{rem}{\medskip\noindent{\it Remark:\/}}{\medskip}
\newenvironment{claim}{\medskip\noindent{\it Claim:\/}}{\medskip}

\numberwithin{equation}{section}

\date{}

\begin{document}

\title{\bf Holomorphic line Bundles over a Tower of Coverings}

\author{
\ \ Yuan Yuan\footnote{ Supported in part by National Science Foundation grant DMS-1412384} \ \
and \ \ Junyan Zhu}

\vspace{3cm} \maketitle

\begin{abstract}
{\small We study a tower of normal coverings over a compact K\"ahler manifold with holomorphic line bundles. When the line bundle is sufficiently positive, we obtain an effective estimate, which implies the Bergman stability. 
As a consequence, we deduce the equidistribution for zero currents of random holomorphic sections. Furthermore, we obtain a variance estimate for those random zero currents, which yields the almost sure convergence under some geometric condition.
}
\end{abstract}

\maketitle

\section{Introduction}
Let $(M, g)$ be a Riemannian manifold with a complete Riemannian metric $g$.
 Suppose that its fundamental group $\Ga=\pi_1(M)$ admits a tower of normal subgroups: $\Ga=\Ga_0\supsetneq\Ga_1\supsetneq\cdots\supsetneq\Ga_j\supsetneq\cdots$ satisfying $2\leq[\Ga_j:\Ga_{j+1}]<\infty$ for each $j\geq0$ and $\bigcap_{j=0}^\infty\Ga_j=\{1\}$. Let $\tilde{M}$ denote the universal covering of $M$. Then $\Ga$ acts on $\tilde{M}$ as a group of deck transformations, which is free and properly discontinuous. Denote $\tilde{M}/\Ga_j$ by $M_j$ and we thus obtain a tower of normal coverings: $\tilde{M}\stackrel{p_j}\longrightarrow M_j\stackrel{q_j}\longrightarrow M_0=M$, where $p_j$ and $q_j$ denote the covering maps satisfying $q_j\circ p_j=p_0$ for all $j\geq0$. Furthermore, for each $j\geq0$, the group action $\Ga\curvearrowright\tilde{M}$ induces $\Ga/\Ga_j\curvearrowright M_j$. The differential structure and 
the Riemannian metric on each $M_j$ and $\tilde{M}$ are determined by those on $M$ via the covering maps $q_j$ and $p_0$.
It is a classical result that every Riemannian manifold whose fundamental group is isomorphic to a finitely generated subgroup of $SL(n, \mathbb{C})$ admits a tower of coverings (cf. \cite{Bo}). There have been a lot of important works studying the
asymptotic behaviors of various topological, geometrical and spectral properties for the tower of coverings of compact Riemannian manifolds (cf. \cite{CG} \cite{DW} \cite{Don1} \cite{Kaz} \cite{Ye1} and etc).

\medskip

In this paper, we are interested in 
the random complex geometry over a tower of coverings. Our motivations come from a series of works by Shiffman, Zelditch and their coauthors (cf. \cite{BSZ} \cite{SZ1} \cite{SZ2} \cite{SZ3} \cite{SZ4} and etc),  as well as the recent paper by Lu and Zelditch \cite{LZ}.
 Let $(M,\om)$ be a compact \kahler\ manifold of complex dimension $n$ with volume form $dV=\frac{\om^n}{n!}$. For simplicity, we still use $\om$ and $dV$ to denote their counterparts on each level of the tower of coverings. Since the covering indices $I_j:=[\Ga:\Ga_j]<\infty$, each $q_j:M_j\to M$ is a finite-sheeted covering. Hence $M_j$'s are all compact.

\medskip

Let $E$ be a holomorphic line bundle over $M$ with a smooth Hermitian metric $h_E$. By abuse of notation, we denote the pullback line bundles $(q_j^*E, q_j^*h_E)$ and $(p_0^*E, p_0^*h_E)$ still by $(E,h_E)$. Then we call $\{(M_j,E)\}$ a tower of normal coverings with line bundles. Let $\Pi_{j,E}$ be the Bergman kernel of the line bundle $(E,h_E)\to M_j$ and $\tilde{\Pi}_E$ be the $L^2$-Bergman kernel of $(E,h_E)\to\tilde{M}$. The base locus of $E\to M_j$ (respectively the $L^2$-base locus of $E\to\tilde{M}$) is denoted by $B_{j,E}$ (respectively $\tilde{B}_E$). The Bergman metric $\Om_{j,E}$ (respectively $\tilde{\Om}_E$) is a smooth positive $(1,1)$-form defined over $M_j\setminus B_{j,E}$ (respectively $\tilde{M}\setminus\tilde{B}_E$). As the Bergman kernel $\Pi_{j,E}$ (respectively $\tilde{\Pi}_E$) is invariant under the group action $\Ga/\Ga_j\curvearrowright M_j$ (respectively $\Ga\curvearrowright\tilde{M}$)(cf. \S 2) while $M_j/(\Ga/\Ga_j)=M$ (respectively $\tilde{M}/\Ga=M$), $\Pi_{j,E}$ (respectively $\tilde{\Pi}_E$) descends to $\underline{\Pi}_{j,E}$ (respectively $\underline{\tilde{\Pi}}_E$) over $M\times M$. Similarly we denote the descendants of base loci and Bergman metrics by $\underline{B}_{j,E}$, $\underline{\tilde{B}}_E$, $\underline{\Om}_{j,E}$ and $\underline{\tilde{\Om}}_E$.

\begin{definition}
A tower of normal coverings with line bundles $\{(M_j,E)\}$ is Bergman stable if the pull-back Bergman kernels $\{\Pi_{j,E}(p_j(\cdot),p_j(\cdot))\}$ converge locally uniformly to $\tilde{\Pi}_E(\cdot,\cdot)$ over $\tilde{M}\times\tilde{M}$.
\end{definition}

In particular, if $E =K_M$ is the canonical line bundle, the Bergman stability has been studied by many authors (cf. \cite{R} \cite{To} \cite{O}  \cite{CF}  \cite{Ye3}, etc). If one assumes the Bergman stability for $E$, by the standard argument in complex analysis, one can derive the higher order convergence for the Bergman metrics (cf. Proposition \ref{BKCinfty}). Furthermore, we are interested in the equidistribution of the simultaneous zeros of random sections in $H^0(M_j,E)$.

\medskip

Let $d_{j,E} = \dim_{\mathbb{C}} H^0(M_j,E)$ be the complex dimension of the space of holomorphic sections. For any $1\leq l\leq n$, we may consider the Grassmannian of $l$-dimensional complex linear subspaces of $H^0(M_j,E)$, denoted by $\gcal_lH^0(M_j,E)$. Endowing $\gcal_lH^0(M_j,E)$ with the normailzed Haar measure $\mu^{(l)}_{j,E}$, thus we obtain a probability space $\left( \gcal_lH^0(M_j,E),\mu^{(l)}_{j,E} \right)$, of which the expectation is denoted by $\mathbb{E}^{(l)}_j$. Any $\scal^l_{j,E}\in\gcal_lH^0(M_j,E)$ can be written as $\scal^l_{j,E}=\text{Span}\{s_{j_1},\dots,s_{j_l}\}$, where $s_{j_1},\dots,s_{j_l}\in H^0(M_j,E)$ are linearly independent. Let $Z_{\scal^l_{j,E}}\in\dcal'^{l,l}(M_j)$ denote the current of integration over the common zero set of $s_{j_1},...,s_{j_l}$ (to be more specific, it is the current of integration over the regular points of the complex analytic set $\{z\in M_j:\ s_{j_1}(z)=\cdots=s_{j_l}(z)=0\}$), which is independent of the choice of the basis $\{s_{j_1},\dots,s_{j_l}\}$. Whenever $B_{j,E}=\emptyset$, by Bertini's theorem (cf. \cite{GH} pp.137), for a generic (thus almost sure in terms of the probability measure $\mu^{(l)}_{j,E}$) choice of $\scal^l_{j,E}=\text{Span}\{s_{j_1},\dots,s_{j_l}\}$, the zero sets $\{z\in M_j:\ s_{j_k}(z)=0\}$ are smooth and intersect transversely for $k=1, \cdots, l$. Hence $\{z\in M_j:\ s_{j_1}(z)=\cdots=s_{j_l}(z)=0\}$ is a smooth submanifold of $M_j$ with codimension $l$. Therefore, we may ignore multiplicities when considering expectations. In order to work on the same level, we study the normalized zero currents
\begin{align}\label{NMZ}
\underline{Z}_{\scal^l_{j,E}}:=I_j^{-1}{q_j}_*Z_{\scal^l_{j,E}}\in \dcal'^{l,l}(M).
\end{align}

The following is the equidistribution result for a general line bundle $E$.

\begin{prop}\label{EZ}
If the tower of normal coverings with line bundles $\{(M_j,E)\}$ is Bergman stable and $\tilde{B}_E=\emptyset$, then there exists $J\geq0$ such that,
$$\mathbb{E}^{(l)}_j\underline{Z}_{\scal^l_{j,E}}=(\pi^{-1}\underline{\Om}_{j,E})^l$$
for any $j\geq J$ and $1\leq l\leq n$,
as $(l,l)$-currents on $M$. Furthermore,  it satisfies
$$\lim_{j\to\infty}\mathbb{E}^{(l)}_j\underline{Z}_{\scal^l_{j,E}}=(\pi^{-1}\underline{\tilde{\Om}}_E)^l$$
in the sense of currents.
\end{prop}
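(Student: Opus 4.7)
My plan is to (i) establish the expected-zero identity on each cover $M_j$ via the Shiffman--Zelditch computation, (ii) push it down to $M$ using $I_j^{-1}(q_j)_*$, and (iii) obtain the limit from the $C^\infty$-convergence of Bergman metrics supplied by Bergman stability and Proposition \ref{BKCinfty}.

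Since $\tilde{B}_E=\emptyset$, the descended kernel $\underline{\tilde{\Pi}}_E(z,z)$ is strictly positive on the compact $M$; Bergman stability then yields $\underline{\Pi}_{j,E}(z,z)\to\underline{\tilde{\Pi}}_E(z,z)>0$ uniformly on $M$, so there exists $J$ with $B_{j,E}=\emptyset$ for every $j\geq J$. By Bertini's theorem, for $\mu^{(l)}_{j,E}$-a.e.\ $\scal^l_{j,E}$ any basis $\{s_{j_1},\dots,s_{j_l}\}$ has smooth, transversely intersecting zero divisors, so $Z_{\scal^l_{j,E}}=Z_{s_{j_1}}\wedge\cdots\wedge Z_{s_{j_l}}$ as $(l,l)$-currents. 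I realize $\mu^{(l)}_{j,E}$ through a Gaussian model: if $v_1,\dots,v_l\in H^0(M_j,E)$ are i.i.d.\ standard complex Gaussians, then $\text{Span}\{v_1,\dots,v_l\}$ is Haar-distributed by unitary invariance, and since the zero current depends only on the span, $\E^{(l)}_j$ is unchanged under this substitution. For a single Gaussian $v=\sum_k a_k s_k^{(j)}$ expanded in an $L^2$-orthonormal basis of $H^0(M_j,E)$, Poincar\'e--Lelong gives $Z_v=\tfrac{\sqrt{-1}}{\pi}\pa\bar{\pa}\log\|v\|_{h_E}$; in a local holomorphic frame $e_E$ with $|e_E|_{h_E}^2=e^{-\phi}$ and $s_k^{(j)}=f_k e_E$, rotational invariance of the complex Gaussian at each $z$ gives
\begin{equation*}
\E\bigl[\log\|v(z)\|_{h_E}^2\bigr]=\log|\Pi_{j,E}(z,z)|_{h_E\otimes\bar{h}_E}+c
\end{equation*}
for a universal constant $c$, and applying $\tfrac{\sqrt{-1}}{2\pi}\pa\bar{\pa}$ produces $\E[Z_v]=\pi^{-1}\Omega_{j,E}$ on $M_j$.

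For $l>1$, the independence of the $v_k$ together with the a.s.\ transversality of their zero divisors allows me to commute expectation with wedge product (testing against smooth forms, with a uniform joint $L^1$-bound on $\log\|v\|_{h_E}$ in $(z,a)$ to apply Fubini past the logarithmic singularities), yielding $\E^{(l)}_j Z_{\scal^l_{j,E}}=(\pi^{-1}\Omega_{j,E})^l$ on $M_j$. Because $\Omega_{j,E}$ is $\Gamma/\Gamma_j$-invariant with $q_j^*\underline{\Omega}_{j,E}=\Omega_{j,E}$ and $q_j$ is $I_j$-sheeted, pushing forward yields $I_j^{-1}(q_j)_*(\Omega_{j,E})^l=(\underline{\Omega}_{j,E})^l$ on $M$; this is the first displayed identity of the proposition. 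For the limit, Proposition \ref{BKCinfty} upgrades Bergman stability to $C^\infty$-convergence $\underline{\Omega}_{j,E}\to\underline{\tilde{\Omega}}_E$ on $M\setminus\underline{\tilde{B}}_E=M$; the $l$-fold wedge product is continuous in this $C^\infty$ topology, which is much stronger than convergence in the sense of currents, and the second conclusion follows.

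The main obstacle I anticipate is the rigorous interchange of expectation with wedge products of the singular currents $Z_{v_k}$: a uniform joint $L^1$-estimate on the potentials $\log\|v(z)\|_{h_E}$ in $(z,a)$ is required to legitimize Fubini against smooth test forms, after which the a.s.\ transversality of $\{v_k=0\}$ permits identifying the wedge of limit currents with the pointwise product of the smooth Bergman densities.
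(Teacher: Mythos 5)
Your route is essentially the paper's: Proposition \ref{BM} gives $B_{j,E}=\emptyset$ for $j\geq J$, the Shiffman--Zelditch expected zero formula gives $\E^{(l)}_j Z_{\scal^l_{j,E}}=(\pi^{-1}\Om_{j,E})^l$ on $M_j$, descent by $I_j^{-1}(q_j)_*$ yields the first identity, and the $\ccal^\infty$-convergence $\underline{\Om}_{j,E}\to\underline{\tilde{\Om}}_E$ (again Proposition \ref{BM}/\ref{BKCinfty}) gives the limit. The only difference is that where the paper invokes Lemma~4.3 of \cite{SZ1} as a black box, you attempt to rederive it via a Gaussian model.

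Your $l=1$ Poincar\'e--Lelong computation is correct and standard. The gap is precisely the step you flag at the end: for $l>1$, the claim $\E\bigl[Z_{v_1}\wedge\cdots\wedge Z_{v_l}\bigr]=\E[Z_{v_1}]\wedge\cdots\wedge\E[Z_{v_l}]$ does not follow formally from independence plus a.s.\ transversality. The wedge of $(1,1)$-currents is a Bedford--Taylor construction, not a multilinear pairing on which Fubini can be applied term by term; the ``uniform joint $L^1$ bound on the potentials'' you invoke is asserted but not produced, and establishing it (or the alternative integral-geometry argument of \cite{SZ1}, pulling back the Crofton formula for Schubert cycles on $\CP^{d-1}$ through the Kodaira map) is exactly the content of Lemma~4.3 in \cite{SZ1}. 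The clean fix is to cite that lemma directly, as the paper does, rather than leaving the $l>1$ exchange unjustified.
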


Next we focus on a positive holomorphic line bundle $(L,h)$ over $M$. Choose the curvature form $\om_h=\frac{\sqrt{-1}}{2}\Theta_h$ as the \kahler\ form of $M$ and $dV_h=\frac{\om_h^n}{n!}$. For any $N\geq1$, the tower of normal coverings with line bundles $\{(M_j,L^N)\}$ can be similarly defined. The main theorem in the paper are the following Bergman stability result for sufficiently positive line bundles over the tower of coverings. The main ingredient in our argument is the theorem of Poincar\'{e} series in \cite{LZ}, from which we can derive the effective estimates of difference between the Bergman kernels on the universal covering and on each level. More precisely, the difference decays exponentially  in terms of a geometric quantity $\tau_j$ (cf.  equation (\ref{tau0})) on the tower of coverings.

\begin{theorem}\label{BK}
There exists $N_1=N_1(M,L,h)>0$ and $\si=\si(M,L,h,N)>0$ when $N\geq N_1$, such that for any compact subsets $K,K'\subset\tilde{M}$, there exists $C_{K,K'}>0$ satisfying
\begin{align*}
\abs{\Pi_{j,L^N}(p_j(z),p_j(w))-\tilde{\Pi}_{L^N}(z,w)}_{h^N}\leq C_{K,K'}e^{-\si\tau_j},
\end{align*}
for all $z\in K,w\in K', N\geq N_1$ and $j$ large enough. 
As a consequence, the tower of normal coverings with line bundles $\{(M_j,L^N)\}$ is Bergman stable for $N\geq N_1$.
\end{theorem}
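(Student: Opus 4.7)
\medskip

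\noindent\textbf{Proof proposal.} The plan is to express the Bergman kernel on each level as a Poincar\'e series over the deck group, then exploit the off-diagonal exponential decay of the universal-cover Bergman kernel for a sufficiently positive line bundle, and finally use volume comparison on $\tilde M$ to control the sum of ``tails'' in terms of $\tau_j$.

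First, I would invoke the Poincar\'e series identity from \cite{LZ}, which (under the positivity assumption ensuring $L^2$-integrability of $\tilde{\Pi}_{L^N}$) lets one write
\begin{align*}
\Pi_{j,L^N}\bigl(p_j(z),p_j(w)\bigr) \;=\; \sum_{\ga\in\Ga/\Ga_j} \tilde{\Pi}_{L^N}(z,\ga w)
\end{align*}
where the sum is interpreted via the natural identifications of $L^N$ over $\tilde M$ with its pullback, using that $\tilde\Pi_{L^N}$ is $\Ga$-equivariant. Subtracting the identity coset immediately gives
\begin{align*}
\Pi_{j,L^N}\bigl(p_j(z),p_j(w)\bigr)-\tilde{\Pi}_{L^N}(z,w)
\;=\; \sum_{\ga\in\Ga/\Ga_j,\;\ga\neq e} \tilde{\Pi}_{L^N}(z,\ga w),
\end{align*}
so the problem reduces to estimating this ``off-diagonal'' Poincar\'e tail.

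Next, for $N$ large enough I would apply an Agmon/Ma--Marinescu type off-diagonal estimate for the Bergman kernel of a positive line bundle on a complete K\"ahler manifold of bounded geometry, of the form
\begin{align*}
\bigl|\tilde{\Pi}_{L^N}(z,w)\bigr|_{h^N} \;\le\; C\,N^{n}\,e^{-c\sqrt{N}\,d(z,w)},
\end{align*}
valid uniformly for $z,w$ in compact sets. Here $c$ depends only on the curvature data $(M,L,h)$, which is inherited by $\tilde M$ via $p_0$. The existence of such a bound, for $N$ sufficiently large, is what fixes the threshold $N_1$ in the statement.

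The main step is then the lattice/volume-comparison argument. For $z\in K$, $w\in K'$, and $\ga\neq e$ with $\ga\in\Ga/\Ga_j$, the definition of $\tau_j$ (equation (\ref{tau0})) guarantees $d(z,\ga w)\ge \tau_j+O_K(1)$, and the number of cosets $\ga$ with $d(z,\ga w)\le R$ grows at most like $e^{A R}$ by Bishop--Gromov volume comparison applied to a fundamental domain of the $\Ga$-action on $\tilde M$ (the constant $A$ depending only on a lower bound on Ricci curvature on $\tilde M$, hence on $M$). Splitting the sum into dyadic shells $\tau_j+k\le d(z,\ga w)<\tau_j+k+1$ and summing the geometric series,
\begin{align*}
\sum_{\ga\neq e}\bigl|\tilde{\Pi}_{L^N}(z,\ga w)\bigr|_{h^N}
\;\le\; C_{K,K'}\,N^{n}\sum_{k\ge 0} e^{A(\tau_j+k)}\,e^{-c\sqrt{N}(\tau_j+k)}
\;\le\; C'_{K,K'}\,e^{-\si\tau_j},
\end{align*}
provided $c\sqrt{N}>A$, which is ensured by enlarging $N_1$; here $\si=c\sqrt N-A>0$ depends on $M,L,h,N$. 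Bergman stability then follows because $\bigcap_j\Ga_j=\{1\}$ forces $\tau_j\to\infty$, and the estimate gives locally uniform convergence on $\tilde M\times\tilde M$.

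The main obstacle I anticipate is the interplay between the off-diagonal Bergman decay rate $c\sqrt{N}$ and the (possibly exponential) volume growth of $\tilde M$: one must choose $N_1$ large enough that the kernel decay beats the lattice-point count, and one needs the off-diagonal estimate with constants that are uniform on compact subsets and explicit enough in $N$ to extract the quantitative exponent $\si(M,L,h,N)$. Once these two ingredients are in place, the Poincar\'e series manipulation and the dyadic summation are essentially formal.
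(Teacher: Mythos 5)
Your strategy is essentially the same as the paper's: invoke the Lu--Zelditch Poincar\'e series identity, subtract the identity term, bound the off-diagonal tail using the Agmon/Ma--Marinescu exponential decay of $\tilde\Pi_{L^N}$, count $\Gamma$-translates in balls by Bishop--Gromov, and sum a geometric series, choosing $N_1$ so that the decay rate $\be\sqrt N$ beats the volume growth exponent. The paper states the Poincar\'e series on the circle bundle (Szeg\H{o} kernel), so that the line bundle identifications are handled cleanly, but that is a presentational difference only.

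However, there is one substantive slip you should fix: the Poincar\'e series that relates $\Pi_{j,L^N}$ to $\tilde\Pi_{L^N}$ is a sum over the deck group of the covering $\tilde M\to M_j$, which is $\Gamma_j$, not over the finite quotient $\Gamma/\Gamma_j$. Concretely, $\Pi_{j,L^N}(p_j(z),p_j(w))=\sum_{\gamma\in\Gamma_j}\tilde\Pi_{L^N}(\gamma z,w)$; the finite group $\Gamma/\Gamma_j$ is instead the deck group of $M_j\to M$ and would appear in a Poincar\'e series relating $\Pi_{j,L^N}$ to $\Pi_{L^N}$ on the base. This is not just notational: the quantity $\tau_j$ as defined in (\ref{tau0}) is $\inf\{d(z,\gamma z):\gamma\in\Gamma_j\setminus\{1\}\}$, so your separation bound $d(z,\gamma w)\ge\tau_j-O_K(1)$ (note also it should be a minus sign, not plus, if you subtract $d(z,w)$) is only available when $\gamma\in\Gamma_j\setminus\{1\}$. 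If you literally ranged $\gamma$ over nontrivial coset representatives of $\Gamma/\Gamma_j$, you could not invoke $\tau_j$ and the argument would collapse. Once you replace $\Gamma/\Gamma_j$ by $\Gamma_j$, your proposal reproduces the paper's proof.
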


The following  base point freeness theorem is an application of H\"ormander type $L^2$-estimates.

\begin{theorem}\label{BL}
There exists some $N_2=N_2(M,L,h)>0$ such that for all $N\geq N_2$, the line bundles $L^N\to\tilde{M}$ are $L^2$-base point free, i.e. $\tilde{B}_{L^N}=\emptyset$.
\end{theorem}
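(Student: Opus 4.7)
The plan is to apply H\"ormander's $L^2$-estimate for $\bar\pa$ on the complete K\"ahler manifold $(\tilde M, \om_h)$, using a singular weight with a logarithmic pole to manufacture a peak holomorphic section at a prescribed point; the subtlety is that the threshold $N_2$ has to be chosen uniformly over all $\tilde z_0 \in \tilde M$, even though $\tilde M$ is non-compact.

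Fix $\tilde z_0 \in \tilde M$. First I would choose a small coordinate ball $U$ around $\tilde z_0$ on which $p_0$ restricts to a biholomorphism onto a chart in $M$, together with a local holomorphic frame $e_L$ of $L|_U$ and a cutoff $\chi \in C^\infty_c(U)$ equal to $1$ on a smaller neighborhood of $\tilde z_0$. The smooth section $s_0 := \chi\, e_L^{\otimes N}$ satisfies $s_0(\tilde z_0) \neq 0$, while $g := \bar\pa s_0 = (\bar\pa\chi)\, e_L^{\otimes N}$ is smooth, compactly supported in $U$, and vanishes near $\tilde z_0$. Any solution of $\bar\pa u = g$ with $u(\tilde z_0) = 0$ then yields the desired section $s := s_0 - u$. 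To force such vanishing, I introduce the singular weight
\begin{align*}
\phi(z) := (n+\ep)\,\chi(z)\,\log \norm{z-\tilde z_0}^2,
\end{align*}
extended by zero outside $U$. Then $e^{-\phi}$ has a non-integrable pole of order $2(n+\ep)$ at $\tilde z_0$, so every $L^2(e^{-\phi})$ section automatically vanishes at $\tilde z_0$; meanwhile $\sqrt{-1}\pa\bar\pa\phi$ is a positive current on $\{\chi=1\}$ and only loses a bounded amount of positivity on the transition annulus $\{\bar\pa\chi \neq 0\}$, bounded below by a constant depending only on the $C^2$-norm of $\chi$ and the local coordinate geometry.

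Next I would apply H\"ormander's $L^2$-existence theorem on the complete K\"ahler manifold $(\tilde M, \om_h)$ to the twisted bundle $(L^N, h^N e^{-\phi})$. The required curvature condition is
\begin{align*}
N\om_h + \mathrm{Ric}(\om_h) + \sqrt{-1}\pa\bar\pa\phi \geq c\,\om_h
\end{align*}
for some $c > 0$. The essential observation is that $\Ga$ acts on $(\tilde M, \om_h, L, h)$ by holomorphic isometries with compact quotient $M$, so the chart, cutoff and logarithmic weight can be chosen $\Ga$-equivariantly. Consequently $\chi$ and its derivatives, the lower bound of $\om_h$, and $|\mathrm{Ric}(\om_h)|$ are all controlled uniformly in $\tilde z_0$ by finitely many geometric constants on $M$. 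Hence one can pick $N_2 = N_2(M,L,h)$ so that for $N \geq N_2$ the curvature inequality holds for every $\tilde z_0 \in \tilde M$. H\"ormander then produces $u$ with $\bar\pa u = g$ and $\int_{\tilde M} \abs{u}^2_{h^N}\, e^{-\phi}\, dV_h < \infty$; the weight singularity forces $u(\tilde z_0) = 0$, and since $\phi$ is bounded outside $U$, $u \in L^2(\tilde M, L^N)$. The section $s := s_0 - u$ is then the sought-after $L^2$ holomorphic section non-vanishing at $\tilde z_0$.

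The main obstacle is precisely this uniformity of $N_2$ in the base point $\tilde z_0$: on a non-compact manifold the threshold could a priori drift, and it is only the $\Ga$-equivariance of the entire construction, i.e.\ the fact that all the local data descends to the compact base $M$, that makes a single $N_2$ work for all $\tilde z_0 \in \tilde M$ simultaneously. The other ingredients (completeness of $\om_h$ and the H\"ormander machinery on complete K\"ahler manifolds) are standard.
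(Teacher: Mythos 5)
Your proposal follows essentially the same route as the paper's intended argument: a cutoff of a local frame as the approximate section, a singular weight with a log pole of order $\sim 2n$ at the target point, H\"ormander--Demailly's $L^2$-existence theorem on the complete K\"ahler manifold $\tilde M$, and $\Gamma$-equivariance (equivalently, compactness of a fundamental domain $\bar D_0 \subset\subset \tilde M$) to pick a single threshold $N_2$. This is precisely the ``very similar argument of $L^2$-estimates'' that the paper invokes when referencing the Appendix proof, so the ideas match.
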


A direct consequence is the equidistribution  result of the simultaneous zeros of random sections of the positive line bundle.

\begin{cor}\label{EZ(L)}
Let $N^*(M,L,h)=\max\{N_1,N_2\}$. For all $N\geq N^*$ and $1\leq l\leq n$, the expectation of the normailzed zero current $\underline{Z}_{\scal^l_{j,L^N}}$ satisfies
$$\lim_{j\to\infty}\mathbb{E}^{(l)}_j\underline{Z}_{\scal^l_{j,L^N}}=(\pi^{-1}\underline{\tilde{\Om}}_{L^N})^l$$ in the sense of currents.
\end{cor}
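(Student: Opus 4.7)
The plan is to show that Corollary \ref{EZ(L)} is an immediate consequence of Proposition \ref{EZ} applied to $E = L^N$, once both hypotheses of that proposition are verified for $N \geq N^*$. The definition $N^* = \max\{N_1, N_2\}$ is tailored precisely so that both Theorem \ref{BK} and Theorem \ref{BL} are available in that range, making the corollary a packaging result rather than a new argument.

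First I would fix $N \geq N^*$ and $1 \leq l \leq n$, and verify the two hypotheses of Proposition \ref{EZ} for the tower of normal coverings with line bundles $\{(M_j,L^N)\}$. Since $N \geq N_1$, Theorem \ref{BK} yields the Bergman stability: the pull-back Bergman kernels $\Pi_{j,L^N}(p_j(\cdot),p_j(\cdot))$ converge locally uniformly to $\tilde{\Pi}_{L^N}(\cdot,\cdot)$ on $\tilde{M} \times \tilde{M}$. This is what guarantees that the descended Bergman metrics $\underline{\Om}_{j,L^N}$ converge (in fact in $C^\infty_{\text{loc}}$, by Proposition \ref{BKCinfty}) to a well-defined smooth positive $(1,1)$-form $\underline{\tilde{\Om}}_{L^N}$ on $M$, so that the right-hand side of the claimed limit makes sense. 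Since $N \geq N_2$, Theorem \ref{BL} yields $\tilde{B}_{L^N} = \emptyset$. This is the input needed to apply Bertini's theorem to generic $l$-tuples of sections, so that the intersection of their zero sets is transverse almost surely and the normalized zero current $\underline{Z}_{\scal^l_{j,L^N}}$ can be treated without multiplicities.

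Then, applying Proposition \ref{EZ} with $E = L^N$ directly gives
$$\lim_{j \to \infty} \mathbb{E}^{(l)}_j \underline{Z}_{\scal^l_{j,L^N}} = (\pi^{-1} \underline{\tilde{\Om}}_{L^N})^l$$
in the sense of currents, which is exactly the statement of Corollary \ref{EZ(L)}. Because the corollary is obtained by feeding the two main theorems into Proposition \ref{EZ}, there is no substantive obstacle at this stage; the genuinely hard work is located upstream, specifically in the effective Bergman kernel estimate of Theorem \ref{BK} (whose proof relies on Poincar\'e series and the decay rate controlled by $\tau_j$) and in the $L^2$-base-point-freeness of Theorem \ref{BL} (established via H\"ormander type estimates), both of which I would simply invoke here.
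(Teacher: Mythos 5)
Your proof is correct and follows exactly the paper's own argument: the corollary is obtained by applying Proposition \ref{EZ} with $E=L^N$, with Bergman stability supplied by Theorem \ref{BK} (for $N\geq N_1$) and $\tilde{B}_{L^N}=\emptyset$ supplied by Theorem \ref{BL} (for $N\geq N_2$), which is precisely why $N^*=\max\{N_1,N_2\}$.
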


We also consider the unit sphere $SH^0(M_j,L^N)=\{s\in H^0(M_j,L^N):\ \norm{s}_{h^N}=1\}\simeq S^{2d_{j,L^N}-1}$ with the normalized Haar measure $\nu_{j,L^N}$. This probability space is the same as the space $\langle\gcal_1H^0(M_j,L^N),\mu^1_{j,L^N}\rangle$ discussed above. Then the variance of the zero currents can also be estimated in terms of  $\tau_j$. 

\begin{theorem}\label{Variance}
With the same $N^*>0$ as in Corollary \ref{EZ(L)}. For all $N\geq N^*$ and any smooth test form $\psi\in\dcal^{n-1,n-1}(M)$, the normalized zero current $\underline{Z}_{s_j}=I_j^{-1}{q_j}_*Z_{s_j}$ satisfies
\begin{align*}
\begin{split}
Var\left((\underline{Z}_{s_j},\psi)\right)=&\int_{SH^0(M_j,L^N)}\abs{(\underline{Z}_{s_j}-\pi^{-1}\underline{\Om}_{j,L^N},\psi)}^2 d\nu_{j,L^N}(s_j) \\
\lesssim&[\exp\{-c\tau_{\lfloor\frac{j}{2}\rfloor}\}+2^{-\frac{j}{2}}]\ \norm{\sqrt{-1}\pa\bar{\pa}\psi}_{L^1(M)}^2,
\end{split}
\end{align*}
for $j$ large enough, where 
$c=c(M,L,h,N)>0$. In addition,
\begin{align*}
\lim_{j\to\infty}Var\left((\underline{Z}_{s_j},\psi)\right)=0.
\end{align*}
\end{theorem}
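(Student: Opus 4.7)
My approach is to convert the variance via Poincaré-Lelong into a double integral of a covariance kernel against $\sqrt{-1}\pa\bar{\pa}q_j^*\psi$, express the kernel in terms of the normalized Bergman kernel, and then combine Theorem \ref{BK} with the off-diagonal exponential decay of $\tilde\Pi_{L^N}$ to produce the geometric rate $\exp(-c\tau_{\lfloor j/2\rfloor})$. The $2^{-j/2}$ contribution will arise from a finite-dimensional comparison between the sphere measure $\nu_{j,L^N}$ and a standard complex Gaussian on $H^0(M_j,L^N)$.

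\textbf{Step 1 (Variance as a double integral).} For any $s_j\in SH^0(M_j,L^N)$, Poincaré-Lelong gives $Z_{s_j}=\tfrac{\sqrt{-1}}{\pi}\pa\bar{\pa}\log\|s_j\|_{h^N}$, so $(\underline{Z}_{s_j},\psi)=I_j^{-1}(Z_{s_j},q_j^*\psi)=\tfrac{1}{\pi I_j}\int_{M_j}\log\|s_j\|_{h^N}\,\sqrt{-1}\pa\bar{\pa}q_j^*\psi$. Centering and squaring produces
\[
Var\!\left((\underline{Z}_{s_j},\psi)\right)=\frac{1}{\pi^2 I_j^2}\int_{M_j\times M_j}K_j(z,w)\,\sqrt{-1}\pa\bar{\pa}q_j^*\psi(z)\wedge\sqrt{-1}\pa\bar{\pa}q_j^*\psi(w),
\]
where $K_j(z,w)=\operatorname{Cov}_{\nu_{j,L^N}}\!\bigl(\log\|s_j(z)\|_{h^N},\log\|s_j(w)\|_{h^N}\bigr)$.

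\textbf{Step 2 (Kernel formula).} By the Sodin-Tsirelson / Shiffman-Zelditch identity applied to the complex Gaussian analogue, $K_j(z,w)=\tilde G\bigl(|P_{j,L^N}(z,w)|_{h^N}^2\bigr)+\epsilon_j(z,w)$, where $P_{j,L^N}(z,w)=\Pi_{j,L^N}(z,w)/\sqrt{\Pi_{j,L^N}(z,z)\Pi_{j,L^N}(w,w)}$ has $|P_{j,L^N}|\leq 1$ by Cauchy-Schwarz, $\tilde G(t)=\tfrac14\operatorname{Li}_2(t)$ is bounded on $[0,1]$ with $\tilde G(t)=O(t)$ at the origin, and $\epsilon_j=O(d_{j,L^N}^{-1/2})=O(I_j^{-1/2})\leq 2^{-j/2}$ encodes the sphere/Gaussian discrepancy (using $I_j\geq 2^j$). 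Combined with $\int_{M_j}|\pa\bar{\pa}q_j^*\psi|=I_j\int_M|\pa\bar{\pa}\psi|$, this error piece contributes at most $2^{-j/2}\norm{\sqrt{-1}\pa\bar{\pa}\psi}_{L^1(M)}^2$ to the variance, matching the second term in the stated bound.

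\textbf{Step 3 (Geometric decay and the main obstacle).} It remains to estimate $(\pi I_j)^{-2}\int_{M_j\times M_j}\tilde G(|P_{j,L^N}|^2)\,|\pa\bar{\pa}q_j^*\psi(z)|\wedge|\pa\bar{\pa}q_j^*\psi(w)|$. Because $|P_{j,L^N}|^2$ is diagonal $\Ga/\Ga_j$-invariant and $|\pa\bar{\pa}q_j^*\psi|$ is individually $\Ga/\Ga_j$-invariant, a fundamental-domain argument reduces this to $(\pi^2 I_j)^{-1}\int_M\!\int_{M_j}\tilde G(|P_{j,L^N}(\tilde z,w)|_{h^N}^2)\,|\pa\bar{\pa}\psi|(z)\,|\pa\bar{\pa}\psi|(q_jw)$ with $\tilde z\in\tilde M$ a fixed lift of $z$. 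For each such $\tilde z$, split the $w$-integral according to whether the nearest lift $\tilde w\in\tilde M$ satisfies $d_{\tilde M}(\tilde z,\tilde w)\leq \tau_{\lfloor j/2\rfloor}$. In the near subregion, Theorem \ref{BK} allows replacing $\Pi_{j,L^N}(\tilde z,w)$ by $\tilde\Pi_{L^N}(\tilde z,\tilde w)$ up to an $O(e^{-\sigma\tau_j})$ error, and the standard exponential off-diagonal decay $|\tilde\Pi_{L^N}(\tilde z,\tilde w)|_{h^N}\lesssim\exp(-c\,d_{\tilde M}(\tilde z,\tilde w))$ for sufficiently positive line bundles (Ma-Marinescu type) bounds the integrand at the boundary by $\exp(-c\tau_{\lfloor j/2\rfloor})$. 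In the far subregion, use the Poincaré-series representation $\Pi_{j,L^N}(\tilde z,w)=\sum_{\ga\in\Ga_j\backslash\Ga}\tilde\Pi_{L^N}(\tilde z,\ga\tilde w)$ (whose convergence for $N\geq N_1$ is the key Lu-Zelditch input) together with the same off-diagonal decay to produce a matching $\exp(-c\tau_{\lfloor j/2\rfloor})$ bound uniformly in $\tilde z$. Integrating against the absolutely integrable form $|\pa\bar{\pa}\psi|$ on $M$ yields the main bound $\exp(-c\tau_{\lfloor j/2\rfloor})\norm{\sqrt{-1}\pa\bar{\pa}\psi}_{L^1(M)}^2$, and $Var\to 0$ follows because $\bigcap_j\Ga_j=\{1\}$ forces $\tau_j\to\infty$. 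The crux is to calibrate the splitting radius so that the near-region Bergman-stability error and the far-region Poincaré-series tail both collapse into the common geometric rate $\exp(-c\tau_{\lfloor j/2\rfloor})$, uniformly in the base point, while controlling the mild logarithmic singularity of $\tilde G$ at $|P|^2=1$ via the local diagonal Bergman asymptotics so that only $\norm{\sqrt{-1}\pa\bar{\pa}\psi}_{L^1(M)}$, and not a higher-derivative norm, appears in the final estimate.
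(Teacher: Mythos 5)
Your skeleton is correct in broad outline (Poincaré--Lelong to reduce the variance to an integral of a covariance kernel against $\sqrt{-1}\pa\bar\pa\psi$, replace the kernel by $\tilde G(|P_{j,L^N}|^2)$, then use $\tilde G(t)=O(t^2)$ and the Lu--Zelditch Poincaré series with Agmon decay to estimate the off-diagonal sum). But your Step 2 contains a substantive misconception: the $2^{-j/2}$ term does \emph{not} come from a sphere-versus-Gaussian discrepancy. The Shiffman--Zelditch Theorem~3.1 identity, as used in the paper, gives the \emph{exact} variance for the spherical ensemble in terms of $\tilde G(P_{j,L^N})$; there is no auxiliary $\epsilon_j=O(d_{j,L^N}^{-1/2})$ correction to $K_j$, and introducing one to manufacture the factor $2^{-j/2}$ is not a step one can carry out. (Also, $\tilde G(t)=\tfrac{1}{4\pi^2}\mathrm{Li}_2(t^2)$, so $\tilde G(t)=O(t^2)$ near $0$, not $O(t)$; the paper uses $\tilde G(t)\le t^2/24$.)

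The actual source of $2^{-\lfloor j/2\rfloor}$ is combinatorial and comes from the estimation of the normalized off-diagonal sum $A_j(\tilde z,\tilde w)/I_j$ with $A_j=\sum_{[\ga]_j:\,d(\ga\tilde z,\tilde w)\le\tau_j/2}|\tilde\Pi_{L^N}(\ga\tilde z,\tilde w)|^2_{h^N}$. One cannot bound $A_j/I_j$ by a pure $e^{-c\tau_j}$ term, because $A_j$ already includes a bounded ``core'' of cosets accumulated at earlier levels. The paper's key device is a telescoping comparison: the cosets entering $A_{j+1}$ but not $A_j$ lie at distances in $(\tau_j/2,\tau_{j+1}/2]$, so Agmon gives $A_{j+1}\le A_j+\Delta_j e^{-\be\sqrt N\,\tau_j}$, and iterating from level $j$ down to level $\lfloor j/2\rfloor$ yields $A_j\le I_j\,e^{-\be\sqrt N\,\tau_{\lfloor j/2\rfloor}}+A_{\lfloor j/2\rfloor}$. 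Dividing by $I_j$ and using $A_{\lfloor j/2\rfloor}\lesssim I_{\lfloor j/2\rfloor}$ together with $I_{\lfloor j/2\rfloor}/I_j\le 2^{-\lfloor j/2\rfloor}$ produces precisely the two terms $e^{-c\tau_{\lfloor j/2\rfloor}}$ and $2^{-\lfloor j/2\rfloor}$. Your Step~3 near/far splitting gestures at this but, as written, would only produce the exponential term and leaves the geometric term unaccounted for. You should replace the fictitious measure-discrepancy argument with this telescoping estimate on the Poincaré series normalized by the covering index.
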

We want to point out here that the constants $\si=\si(M,L,h,N)$ in Theorem \ref{BK} and $c=c(M,L,h,N)$ in Theorem \ref{Variance} can be arbitrarily large if we let $N$ be large enough.

\medskip
In \S2, we collect all the preliminaries and  background. In \S3, we 
discuss the equidistribution for
a general holomorphic line bundle $E$. \S4 is devoted to show the Bergman stability 
for a positive holomorphic line bundle $L$. At last, we spend the entire \S5 proving the variance estimates and almost sure convergence of the normalized zero currents.

\subsection*{Acknowledgement}
The authors would like to thank Professor Bernard Shiffman and Professor Steve Zelditch for their helpful discussions and Professor Xiaojun Huang for his constant support. The authors also would like to thank the referee for the penetrating comments.
Part of the work was done when the first author was visiting Capital Normal University in China and the second author was visiting Syracuse University. They are  grateful to  both departments for the warm hospitality.

\section{Preliminaries and Background}

For convenience, we are going to omit the $0$ index in our following notations concerning the base manifold $M=M_0$.
\subsection{Bergman Kernel, Base Locus and Bergman Metric}
The Hermitian inner product of sections of the line bundle $(E,h_E)\to M$ is defined by
$$\llangle s,s'\rrangle:=\int_{M}(s,s')_{h_E}\ dV.$$
If one chooses an orthonormal basis $\{S_k\}_{k=1}^{d_E}$ of $H^0(M,E)$, then for any $z,w\in M$, the Bergman kernel is given by
$$\Pi_E(z,w):=\sum_{k=1}^{d_E}S_k(z)\otimes\overline{S_k(w)}.$$
It is straightforward to check that $\Pi_E(z,w)$ does not depend on the choice of the orthonormal basis and $\Pi_E\in H^0(M\times M,E\boxtimes\bar{E})$ is the integral kernel of the orthogonal projection $L^2(M,E)\to H^0(M,E)$ satisfying the reproducing property:
$$s(w)=\int_{M}(s(z),\Pi_E(z,w))_{h_E}\ dV(z),\text{\quad for all }w\in M\text{ and }s\in H^0(M,E).$$

\medskip

The base locus $B_E$ is the common zero set for all holomorphic sections:
\begin{align}\label{Bergman0}
B_E:=\{z\in M:\ s(z)=0\text{ for all }s\in H^0(M,E)\}=\{z\in M:\ \Pi_E(z,z)=0\}.
\end{align}
Suppose $U,U'\subset M$ are two open sets with local frames $e_E,e'_E$ defined on it. Then there exist holomorphic functions $\{f_k\}_{k=1}^{d_E}\subset\ocal(U)$ and $\{g_k\}_{k=1}^{d_E}\subset\ocal(U')$ such that $S_k=f_ke_E$ over $U$ and $S_k=g_ke'_E$ over $U'$ for $1\leq k\leq d_E$. Hence,
\begin{align}\label{Bergman1}
\Pi_E(z,w)=\Phi_E(z,w)e_E(z)\otimes\overline{e'_E(w)}\text{\quad for }z\in U, w\in U',
\end{align}
where
$$\Phi_E(z,w):=\sum_{k=1}^{d_E} f_k(z)\overline{g_k(w)}$$
is holomorphic in $z\in U$ and anti-holomorphic in $w\in U'$. Moreover, we restrict $\Pi_E$ on the diagonal and denote $$\phi_E(z)=\Phi_E(z,z)=\sum_{k=1}^{d_E}\abs{f_k(z)}^2\text{\quad for }z\in U.$$
Therefore $\phi_E\in\ccal^\infty(U,\R^+)$ and is nonvanishing on $U\setminus B_E$. The Bergman metric $\Om_E$ can be defined on $U\setminus B_E$ by
\begin{align}\label{Bergman2}
\Om_E:=\frac{\sqrt{-1}}{2}\pa\bar{\pa}\log{\phi_E}\geq0,
\end{align}
which is independent of the choice of the local frame $e_E$. So we may choose an open covering of $M$ and $\Om_E$ is thus defined on $M\setminus B_E$.

\medskip


The Bergman kernels $\Pi_{j,E}$ (respectively $L^2$-Bergman kernel $\tilde{\Pi}_E$), base loci $B_{j,E}$ (respectively $L^2$-base locus $\tilde{B}_E$) and Bergman metrics $\Om_{j,E}$ (respectively $\tilde{\Om}_E$) over $M_j$ (respectively $\tilde{M}$) can be defined in a similar way. Since the actions $\Ga/\Ga_j\curvearrowright M_j,\Ga\curvearrowright\tilde{M}$ are given by isometries of the manifolds that preserve the metrics $h_E$ of line bundles, they also preserve the Hermitian inner products of holomorphic sections. Therefore the Bergman kernels are invariant under these actions in the sense that
\begin{align}\label{Bergman3}
\Pi_{j,E}([\ga]_jz,[\ga]_jw)=\Pi_{j,E}(z,w),\text{\quad for all }z,w\in M_j\text{ and }[\ga]_j\in\Ga/\Ga_j,
\end{align}
and
\begin{align}\label{Bergman4}
\tilde{\Pi}_E(\ga z,\ga w)=\tilde{\Pi}_{E}(z,w),\text{\quad for all }z,w\in\tilde{M}\text{ and }\ga\in\Ga.
\end{align}
By (\ref{Bergman0}) and (\ref{Bergman2}), the base loci and Bergman metrics are both induced from the Bergman kernels and they also share the invariant properties. Therefore, we have the descendants $\underline{\Pi}_{j,E}$, $\underline{B}_{j,E}$ and $\underline{\Om}_{j,E}$ on $M$ in the sense that $\underline{\Pi}_{j,E}(q_j(\cdot),q_j(\cdot))=\Pi_{j,E}(\cdot,\cdot)$, $q_j^{-1}(\underline{B}_{j,E})=B_{j,E}$ and $q_j^*\underline{\Om}_{j,E}=\Om_{j,E}$. On the other hand, for any $j\geq0$, as $q_j:M_j\to M$ is a proper local diffeomorphism, the direct image ${q_j}_*\Om_{j,E}$ satisfies
\begin{align}\label{Bergman5}
{q_j}_*\Om_{j,E}=I_j\underline{\Om}_{j,E},
\end{align}
where $I_j = [\Gamma : \Gamma_j] < \infty$ is the covering indices.

\medskip

The following theorem implies that the Bergman kernel on the universal covering of a positive line bundle concentrates on the diagonal. It is generally referred to as Agmon estimates, which serves as a powerful tool in our proofs.
\begin{theorem}[cf. \cite{LZ} Theorem 2.1 or \cite{MM2} Theorem 0.1]
Let $M$ be a complete \kahler\ manifold and $(L,h)\to M$ be a positive holomorphic line bundle. Then there exists some $\be=\be(M,L,h)>0$ such that
the $L^2$-Bergman kernel $\tilde{\Pi}_{L^N}$ of $(L^N,h^N)\to\tilde{M}$ satisfies
\begin{align*}
\abs{\tilde{\Pi}_{L^N}(z,w)}_{h^N}\lesssim e^{-\be\sqrt{N}dist(z,w)},
\end{align*}
for $z,w\in\tilde{M}$ with $dist(z,w)\geq1$.
\end{theorem}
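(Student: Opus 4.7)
The plan is to establish the Agmon-type exponential off-diagonal decay via the standard spectral gap plus finite propagation speed strategy, adapted to the non-compact complete setting of the universal cover $\tilde{M}$.

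First, I would set up the Kodaira Laplacian $\Box_N=\bar{\pa}^*\bar{\pa}$ acting on $L^2$ sections of $(L^N,h^N)\to\tilde{M}$. Since $L$ is positive with curvature $\Theta_h$, the Bochner--Kodaira--Nakano identity applied to $L^N$ (whose curvature is $N\Theta_h$) produces a spectral gap
\begin{equation*}
\Box_N\geq cN\quad\text{on }(\ker\Box_N)^{\perp},
\end{equation*}
with $c=c(M,L,h)>0$ independent of $N$ (the positivity of $\Theta_h$, and hence the gap constant, is local and pushes up to $\tilde{M}$ because the Bochner formula is local; completeness ensures $\Box_N$ is essentially self-adjoint so that the spectral theorem applies). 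The $L^2$-Bergman projection is then $\tilde{\Pi}_{L^N}=\mathbf{1}_{\{0\}}(\Box_N)$.

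Next I would apply functional calculus combined with finite propagation speed. Pick an even Schwartz function $f$ with $f(0)=1$ and $|f(\lambda)|\lesssim e^{-\be\lambda}$ (e.g.\ a Gevrey-class function) whose Fourier transform $\hat{f}$ is suitably concentrated. Using $\tilde{\Pi}_{L^N}=f(\sqrt{\Box_N})-[f(\sqrt{\Box_N})-\mathbf{1}_{\{0\}}(\Box_N)]$ and the spectral gap, the remainder is bounded in operator norm by $\sup_{\lambda\geq\sqrt{cN}}|f(\lambda)|\lesssim e^{-\be\sqrt{cN}}$. Decompose $\hat{f}$ dyadically: for the part of $\hat{f}$ supported in $[-R,R]$, finite propagation speed for the wave equation (valid on any complete Riemannian manifold) gives
\begin{equation*}
\mathrm{Supp}\,\bigl(\cos(\xi\sqrt{\Box_N})(z,w)\bigr)\subset\{dist(z,w)\leq|\xi|\},
\end{equation*}
so the Fourier inversion representation $f(\sqrt{\Box_N})=\tfrac{1}{\pi}\int\hat{f}(\xi)\cos(\xi\sqrt{\Box_N})\,d\xi$ shows that the integrand contributes only for $|\xi|\geq dist(z,w)$. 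Choosing $R\sim\sqrt{N}\,dist(z,w)$ and using the exponential decay of $\hat{f}$ at infinity yields the pointwise bound $|f(\sqrt{\Box_N})(z,w)|_{h^N}\lesssim e^{-\be\sqrt{N}\,dist(z,w)}$.

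Finally, to upgrade from an operator-level bound to the pointwise estimate on the kernel, I would apply elliptic regularity: $\tilde{\Pi}_{L^N}(z,w)$ is holomorphic in $z$ and anti-holomorphic in $w$, so a mean value inequality on unit balls gives $|\tilde{\Pi}_{L^N}(z,w)|_{h^N}\lesssim\|\tilde{\Pi}_{L^N}\|_{L^2(B(z,1)\times B(w,1))}$, and the operator-norm decay above combined with local $L^2\to L^\infty$ Sobolev estimates (uniform in $N$ up to polynomial factors absorbed in $\be$) yields the claimed $e^{-\be\sqrt{N}dist(z,w)}$ bound for $dist(z,w)\geq1$. The main obstacle I anticipate is the quantitative passage from an $L^2$-operator bound to a pointwise bound on the Bergman kernel with the \emph{same} exponential rate; this requires carefully choosing the Schwartz function $f$ so that the polynomial losses from Sobolev embedding are absorbed, and using that the spectral gap is uniform in $N$ so that bounded geometry on $\tilde{M}$ (inherited from compactness of $M$) controls all the auxiliary constants.
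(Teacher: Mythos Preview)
The paper does not give its own proof of this statement: it is quoted as a background result from \cite{LZ} (Theorem~2.1) and \cite{MM2} (Theorem~0.1) and used throughout as a black box under the name ``Agmon estimates''. So there is no in-paper argument to compare against.

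That said, your outline is essentially the strategy of \cite{MM2}: Bochner--Kodaira gives a spectral gap $\Box_N\geq cN$ on $(\ker\Box_N)^{\perp}$, and one then represents the spectral projector via functional calculus and invokes finite propagation speed for $\cos(\xi\sqrt{\Box_N})$ on the complete manifold $\tilde{M}$. One technical point to correct: you ask for an even Schwartz $f$ with $f(0)=1$, $|f(\lambda)|\lesssim e^{-\be\lambda}$, \emph{and} $\hat{f}$ ``suitably concentrated'' with exponential tails. A function cannot decay exponentially while also having exponentially decaying (let alone compactly supported) Fourier transform; Gevrey cutoffs only give sub-exponential decay $e^{-c|\lambda|^{\alpha}}$ with $\alpha<1$. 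In the actual argument one uses the spectral gap differently: since $\mathrm{spec}(\Box_N)\subset\{0\}\cup[cN,\infty)$, one may write $\tilde{\Pi}_{L^N}=\chi(\Box_N/(cN))$ for \emph{any} smooth even cutoff $\chi$ with $\chi(0)=1$ and $\mathrm{supp}\,\chi\subset(-1,1)$, with no error term at all. Writing $\chi(\lambda^2/(cN))$ as a cosine transform in $\lambda/\sqrt{N}$ of a function with the appropriate decay then combines with finite propagation to produce the $e^{-\be\sqrt{N}\,dist(z,w)}$ decay directly. Apart from this adjustment in the choice of $f$, your plan matches the cited proofs.
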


\subsection{Circle Bundle and \szego\ Kernel}

We now focus on a positive holomorphic line bundle $(L,h)$ over $M$, i.e. $h$ is a smooth Hermitian metric with positive curvature form $\om_h=\frac{\sqrt{-1}}{2}\Theta_h = - \frac{\sqrt{-1}}{2} \partial \bar\partial \log h$. $L^{-1}$ denotes its dual bundle with dual metric $h^{-1}$. $\rho$ is a function on $L^{-1}$ given by $\rho(\la):=\abs{\la}^2_{h^{-1}}-1$, which is a defining function for the disc bundle $D:=\{\la\in L^{-1}:\ \abs{\la}_{h^{-1}}\leq1\}$. When $L$ is a positive line bundle, $D$ is a strictly pseudoconvex domain. Therefore the principal $S^1$-bundle $\pi:X\to M$ given by $X:=\{\la\in L^{-1}:\abs{\la}_{h^{-1}}=1\}=\pa D$ is a strictly pseudoconvex $CR$ manifold. 
$\al:=-\sqrt{-1}\pa\rho|_X=\sqrt{-1}\bar{\pa}\rho|_X$ is a contact form on $X$ with $d\al=2\pi^*\om_h$ and $dV_X:=\frac{\al\wedge(d\al)^n}{2^{n+1}\pi n!}$ is a volume form on $X$. For any $N\geq1$, we can lift a section $s\in H^0(M,L^N)$ to $\hat{s}\in\hcal^2_N(X)$, the Hardy space of $L^2$ CR-functions on $X$ satisfying the equivariant condition $\hat{s}(r_\theta x)=e^{\sqrt{-1}N\theta}\hat{s}(x)$, where $x\in X$ and $r_{\theta}$ denotes the $S^1$-action on $X$. In fact,
$$\hat{s}(x):=\langle x^N,s(\pi(x))\rangle,$$
where $\langle\cdot\rangle$ denotes the pairing of $L^N$ with $L^{-N}$. If $e_L$ is a local frame of $L$ over some open set $U$, for $z\in U$, we use $(z,\theta)$ as local coordinate for $x=e^{\sqrt{-1}\theta}\abs{e_L(z)}_he^{-1}_L(z)\in X$. Suppose that $s=fe^N_L$ over $U$ for some $f\in\ocal(U)$,
then in terms of the local coordinates,
$$\hat{s}(z,\theta)=\langle(e^{\sqrt{-1}\theta}\abs{e_L(z)}_he^{-1}_L(z))^N,f(z)e^N_L(z)\rangle=e^{\sqrt{-1}N\theta}\abs{e_L(z)}_h^Nf(z).$$
As a result, the lifting preserves the $L^2$-inner products:
\begin{align}\label{Equivariant}
\llangle s,s'\rrangle=(\hat{s},\hat{s}')_{L^2(dV_X)}:=\int_X\hat{s}\bar{\hat{s}}'\ dV_X.
\end{align}
Let $\{S_k\}_{k=1}^{d_{L^N}}$ be an orthonormal basis of $H^0(M,L^N)$. Then by (\ref{Equivariant}), $\{\hat{S}_k\}_{k=1}^{d_{L^N}}$ forms an orthonormal basis of $\hcal^2_N(X)$. In this way, the Bergman kernel $\Pi_{L^N}$ can be lifted to the \szego\ kernel of $\hcal^2_N(X)$: $$\hat{\Pi}_N(x,y)=\sum_{k=1}^{d_{L^N}}\hat{S}_k(x)\overline{\hat{S}_k(y)}\text{\quad for }x,y\in X.$$
Similarly we can define circle bundles $\pi_j:X_j\to M_j$, $\tilde{\pi}:\tilde{X}\to\tilde{M}$ and \szego\ kernels $\hat{\Pi}_{j,N}$, $\hat{\tilde{\Pi}}_N$ of $X_j$ and $\tilde X$ respectively, of which the local expressions are
\begin{align}\label{Szego1}
\hat{\Pi}_{j,N}(z,\theta,w,\varphi)=e^{\sqrt{-1}N(\theta-\varphi)}\abs{e_{j,L}(z)}_h^N\abs{e'_{j,L}(w)}_h^N\Phi_{j,L^N}(z,w),\text{\quad for }j\geq0,
\end{align}
if $\Pi_{j,L^N}(z,w)=\Phi_{j,L^N}(z,w)e^N_{j,L}(z)\otimes\overline{e'^N_{j,L}(w)}$, and
\begin{align}\label{Szego2}
\hat{\tilde{\Pi}}_N(z,\theta,w,\varphi)=e^{\sqrt{-1}N(\theta-\varphi)}\abs{\tilde{e}_L(z)}_h^N\abs{\tilde{e}'_L(w)}_h^N\tilde{\Phi}_{L^N}(z,w),
\end{align}
if $\tilde{\Pi}_{L^N}(z,w)=\tilde{\Phi}_{L^N}(z,w)\tilde{e}^N_L(z)\otimes\overline{\tilde{e}'^N_L(w)}$. Therefore,
\begin{align}\label{Szego3}
\abs{\hat{\tilde{\Pi}}_N(x,y)}=\abs{\tilde{\Pi}_{L^N}(\tilde{\pi}(x),\tilde{\pi}(y))}_{h^N},\text{\quad for all }x,y\in\tilde{X}.
\end{align}

\medskip

The action $\Ga\curvearrowright\tilde{M}$ can be lifted as a group of CR holomorphic contact transformations on $\tilde{X}$ preserving the contact form $\tilde{\al}$. To be more specific, in terms of compatible local coordinates on $\tilde{X}$ (i.e. if we take $\tilde{e}_L$ as a local frame of $L\to \tilde{M}$ near $z\in\tilde{M}$, then we will take $\tilde{e}_L\circ\ga^{-1}$ as a local frame near $\ga z$),
\begin{align}\label{Szego4}
\ga(z,\theta)=(\ga z,\theta).
\end{align}
Hence the action $\Ga\curvearrowright\tilde{X}$ commutes with the $S^1$-action. As the Bergman kernel $\tilde{\Pi}_{L^N}$ satisfies (\ref{Bergman4}), by (\ref{Szego2}) and (\ref{Szego4}),
\begin{align*}
\hat{\tilde{\Pi}}_N(\ga x,\ga y)=\hat{\tilde{\Pi}}_N(x,y)\text{\quad for all }x,y\in\tilde{X}\text{ and }\ga\in\Ga.
\end{align*}
For each $j\geq0$, the covering map $p_j:\tilde{M}\to M_j$ induces a map $\hat{p}_j:\tilde{X}\to X_j$ such that the following diagram commutes:
\begin{align*}
\begin{array}{ccccc}
  & \tilde{M} & \stackrel{\tilde{\pi}}\longleftarrow & \tilde{X} & \\
  p_j & \downarrow &  & \downarrow & \hat{p}_j \\
  & M_j & \stackrel{\pi_j}\longleftarrow & X_j &
\end{array}.
\end{align*}
In fact, under compatible local coordinates (i.e. for any $z\in \tilde{M}$, if we take $e_{j,L}$ as a local frame of $L\to M_j$ near $p_j(z)\in M_j$, then we will take $\tilde{e}_L=e_{j,L}\circ p_j$ as a local frame of $L\to\tilde{M}$ near $z$),
\begin{align}\label{Szego5}
\hat{p}_j(z,\theta)=(p_j(z),\theta).
\end{align}

The following theorem proved by Z. Lu and S. Zelditch describes the relation between the \szego\ kernels over a manifold and those over the universal covering, which is the essential ingredient in our proof.

\begin{theorem}[\cite{LZ} Theorem 1]
There exists $N_0=N_0(M,L,h)>0$ such that if $N\geq N_0$, then for all $j\geq0$,
\begin{align}\label{LZThm}
\hat\Pi_{j,N}(\hat{p}_j(x),\hat{p}_j(y))=\di\sum_{\ga_j\in\Ga_j}\hat{\tilde{\Pi}}_N(\ga_j x,y),\text{\quad for any }x,y\in\tilde{X}.
\end{align}
\end{theorem}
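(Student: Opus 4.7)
The plan is to recognize the Poincar\'e series
$$S(x,y) := \sum_{\gamma_j \in \Gamma_j} \hat{\tilde\Pi}_N(\gamma_j x, y), \qquad x,y \in \tilde X,$$
as the lift of $\hat\Pi_{j,N}$ by verifying that, after descent to $X_j\times X_j$, it satisfies the three characterizing properties of the \szego\ kernel on $X_j$: $S^1$-equivariance of weight $N$ in both slots, membership of $S(\cdot,y)$ in $\hcal_N^2(X_j)$ for each $y$, and the reproducing identity against $\hcal_N^2(X_j)$. Uniqueness of the reproducing kernel of a Hilbert function space then forces the claimed equality.

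First I would establish absolute, locally uniform convergence of the series for $N$ large. The Agmon decay estimate for $\hat{\tilde\Pi}_N$ yields $|\hat{\tilde\Pi}_N(z,w)|\lesssim e^{-\beta\sqrt{N}\,d(\tilde\pi(z),\tilde\pi(w))}$, while cocompactness of $\Ga\curvearrowright\tilde M$ together with $\Ga$ being finitely generated gives the exponential orbit-growth bound $\#\{\gamma\in\Ga:d(\gamma x_0,x_0)\leq R\}\leq C_1e^{c_1R}$. Choosing $N_0=N_0(M,L,h)$ so that $\beta\sqrt{N_0}>c_1$ makes the series converge, together with all its tangential derivatives, uniformly on compact subsets of $\tilde X\times\tilde X$. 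Next, $\Ga_j$-invariance of $S$ in the first slot is immediate by re-indexing the sum, and invariance in the second slot follows by re-indexing combined with $\hat{\tilde\Pi}_N(\gamma_j x,\gamma_j y)=\hat{\tilde\Pi}_N(x,y)$, a consequence of (\ref{Bergman4}) and (\ref{Szego2}). Since each summand is CR-holomorphic in $x$, anti-CR-holomorphic in $y$, and $S^1$-equivariant of weight $N$, these properties are preserved by the uniform limit, so the descent of $S$ lies in $\hcal_N^2(X_j)$ in its first variable, using compactness of $X_j$.

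The crux is the reproducing identity. Given $s\in H^0(M_j,L^N)$ with lift $\hat s_j\in\hcal_N^2(X_j)$ and $\Ga_j$-invariant lift $\tilde{\hat s}$ to $\tilde X$, unfolding the integral over a Borel fundamental domain $F_j\subset\tilde X$ for $\Ga_j$ gives
\begin{align*}
\int_{X_j} S(x,y)\,\overline{\hat s_j(y)}\,dV_{X_j}(y)
&=\sum_{\gamma_j\in\Ga_j}\int_{F_j}\hat{\tilde\Pi}_N(\gamma_j x,w)\,\overline{\tilde{\hat s}(w)}\,dV(w)\\
&=\int_{\tilde X}\hat{\tilde\Pi}_N(x,w)\,\overline{\tilde{\hat s}(w)}\,dV(w),
\end{align*}
where I used $\hat{\tilde\Pi}_N(\gamma_j x,w)=\hat{\tilde\Pi}_N(x,\gamma_j^{-1}w)$ and $\Ga_j$-invariance of $\tilde{\hat s}$. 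One aims to identify this with $\tilde{\hat s}(x)$. The main obstacle is that $\tilde{\hat s}$, being pulled back from compact $X_j$, is bounded but \emph{not} in $L^2(\tilde X)$, so the $L^2$ reproducing property of $\hat{\tilde\Pi}_N$ does not apply directly. I would resolve this by truncation: multiply $\tilde{\hat s}$ by a smooth cutoff $\chi_R$ supported in $B_R(x)$ and equal to one on $B_{R-1}(x)$. The tail $\int_{B_R(x)^c}\hat{\tilde\Pi}_N(x,w)\tilde{\hat s}(w)\,dV$ is controlled by Agmon decay of $\hat{\tilde\Pi}_N(x,\cdot)$ and vanishes as $R\to\infty$. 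On the truncated piece, the $L^2$ reproducing identity gives $\int\hat{\tilde\Pi}_N(x,w)\chi_R\tilde{\hat s}\,dV=\tilde{\hat s}(x)-\epsilon_R(x)$, where $\epsilon_R$ is a Szeg\H o-projected $\bar\partial_b$-error with source $\bar\partial_b\chi_R\cdot\tilde{\hat s}$ supported in an annulus at distance $\sim R$ from $x$; a H\"ormander-type estimate combined with Agmon decay of $\hat{\tilde\Pi}_N(x,\cdot)$ across that annulus makes $\epsilon_R(x)\to0$. Sending $R\to\infty$ yields $\tilde{\hat s}(x)$ and completes the verification of the reproducing property, hence the characterization.
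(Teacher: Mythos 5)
This statement is not proved in the paper at all: it is quoted verbatim as Theorem 1 of Lu--Zelditch \cite{LZ}, so there is no internal proof to compare your proposal against. What one can do is assess whether your blind reconstruction is a viable proof of that theorem, and it is largely a faithful reconstruction of the Lu--Zelditch strategy: interpret the right-hand side as a Poincar\'e series, prove absolute convergence for $N$ large using the Agmon decay of $\hat{\tilde\Pi}_N$ against the exponential orbit growth coming from bounded geometry, verify the equivariance and descent, and then identify the descended kernel with $\hat\Pi_{j,N}$ by the uniqueness of the reproducing kernel. The choice $\beta\sqrt{N_0}>c_1$ is exactly the right large-$N$ threshold and is $j$-independent because the volume-growth constant depends only on $M$, which is consistent with the statement's $N_0=N_0(M,L,h)$.

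Two points deserve care. First, a minor slip in the unfolding: the reproducing identity is $\hat s_j(x)=\int_{X_j}\hat\Pi_{j,N}(x,y)\hat s_j(y)\,dV_{X_j}(y)$, with no complex conjugate on $\hat s_j$ (the pairing you wrote actually produces $\overline{\hat s_j(x)}$); this is a notational issue and does not affect the plan. Second, and more substantially, the step extending the $L^2$ reproducing property to the bounded $\Gamma_j$-invariant pullback $\tilde{\hat s}\notin L^2(\tilde X)$ is the genuine crux, and you have correctly identified it as such. Your truncation scheme is the right idea, but the control of the error $\epsilon_R=u_R(x)$, where $u_R=(I-\tilde\Pi_N)(\chi_R\tilde{\hat s})$ is the minimal solution of $\bar\partial_b u_R=(\bar\partial_b\chi_R)\tilde{\hat s}$, does \emph{not} come from Agmon decay of the Szeg\H o kernel $\hat{\tilde\Pi}_N(x,\cdot)$; writing $u_R(x)=\chi_R(x)\tilde{\hat s}(x)-\int\hat{\tilde\Pi}_N(x,w)\chi_R(w)\tilde{\hat s}(w)\,dV(w)$ makes this circular. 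What you need instead is exponential spatial localization of the minimal $\bar\partial_b$-solution, i.e.\ a weighted (Agmon-type) H\"ormander estimate with weight $e^{2\theta\sqrt N\,d(x,\cdot)}$ exploiting the $O(N)$ spectral gap of $\square_b$, or equivalently exponential decay of the Green's kernel of $\square_b$ on $(\ker\square_b)^\perp$. With that substitution the estimate $|u_R(x)|\lesssim e^{-c'\sqrt N\,R}\,\|(\bar\partial_b\chi_R)\tilde{\hat s}\|_{L^2}\lesssim e^{(c/2-c'\sqrt N)R}\to0$ closes the argument for $N$ large; without it the step as stated does not go through.
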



\section{Equidistribution for a General Line Bundle $E$}

The following proposition asserts that Bergman stability implies higher order convergence of the Bergman kernels. It follows from the standard normal family argument (cf. Proposition 3.5 in \cite{To}).
\begin{prop}\label{BKCinfty}
If the tower of normal coverings with line bundles $\{(M_j,E)\}$ is Bergman stable, then the pull-back Bergman kernels $\{\Pi_{j,E}(p_j(\cdot),p_j(\cdot))\}$ converge locally uniformly in $\ccal^\infty$ topology to $\tilde{\Pi}_E(\cdot,\cdot)$ over $\tilde{M}\times\tilde{M}$.
\end{prop}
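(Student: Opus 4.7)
The plan is to reduce the $C^\infty$ convergence statement to a standard normal-family argument for holomorphic functions, exploiting the fact that the Bergman kernel is a holomorphic section in the first variable and anti-holomorphic in the second. First I would cover $\tilde M$ by a locally finite collection of trivializing charts $U_\alpha$ for the line bundle $p_0^* E$; by shrinking we may assume each $U_\alpha$ is biholomorphic to a polydisc and that on $U_\alpha\times U_\beta$ we have compatible local frames, so that the kernels admit local representations $\Pi_{j,E}(p_j(z),p_j(w))=\Phi_{j,E}^{\alpha\beta}(z,w)\,e_\alpha(z)\otimes\overline{e_\beta(w)}$ and $\tilde\Pi_E(z,w)=\tilde\Phi_E^{\alpha\beta}(z,w)\,e_\alpha(z)\otimes\overline{e_\beta(w)}$ as in (\ref{Bergman1}), with each $\Phi^{\alpha\beta}_{j,E}$ holomorphic in $z$ and anti-holomorphic in $w$ on $U_\alpha\times U_\beta$.

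Next, given compact sets $K,K'\subset\tilde M$, I would cover them by finitely many such charts with slightly larger concentric charts whose closures are still contained in the trivializing open sets. The Bergman stability hypothesis gives the locally uniform convergence $\Phi_{j,E}^{\alpha\beta}\to\tilde\Phi_E^{\alpha\beta}$ on these larger polydiscs; in particular the sequence $\{\Phi_{j,E}^{\alpha\beta}\}$ is uniformly bounded in $L^\infty$ on each larger polydisc. Since each $\Phi_{j,E}^{\alpha\beta}$ is holomorphic in $z$ and anti-holomorphic in $w$, the Cauchy integral formula applied in each variable separately yields uniform bounds on every partial derivative $\partial^{\mu}_z\bar\partial^{\nu}_w\Phi_{j,E}^{\alpha\beta}$ on the smaller polydisc, with constants depending only on the multi-indices and the polydisc radii.

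From these derivative bounds, the Arzel\`a--Ascoli theorem implies that every subsequence of $\{\Phi_{j,E}^{\alpha\beta}\}$ has a further subsequence converging in $C^\infty$ on the smaller polydisc. But the $C^0$ limit is already pinned down by Bergman stability to be $\tilde\Phi_E^{\alpha\beta}$, so by uniqueness of the $C^\infty$ limit the whole sequence converges to $\tilde\Phi_E^{\alpha\beta}$ in $C^\infty$. Passing back to the bundle-valued kernels via the fixed smooth local frames and combining the finitely many charts covering $K\times K'$ gives the claimed locally uniform $C^\infty$ convergence on $\tilde M\times\tilde M$.

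The only mildly subtle point is the bookkeeping with local frames: one must check that the transition functions do not obstruct turning scalar $C^\infty$ convergence of the $\Phi^{\alpha\beta}_{j,E}$ into bundle-valued $C^\infty$ convergence of $\Pi_{j,E}(p_j(\cdot),p_j(\cdot))$. Since the local frames $e_\alpha$ are fixed smooth (indeed holomorphic) nonvanishing sections independent of $j$, this is automatic and presents no real obstacle; the entire argument is essentially a Montel/Cauchy-estimate packaging of the hypothesis.
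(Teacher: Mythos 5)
Your proposal is correct and follows essentially the same route as the paper's proof: fix trivializing polydiscs with compatible local frames, write the kernels in terms of scalar functions holomorphic in $z$ and anti-holomorphic in $w$, use Bergman stability for $C^0$ bounds, Cauchy estimates for derivative bounds, and Arzel\`a--Ascoli plus uniqueness of the $C^0$ limit to force $C^\infty$ convergence. The only cosmetic difference is that you phrase the final step via the ``every subsequence has a $C^\infty$-convergent further subsequence with a forced limit'' packaging, while the paper phrases it as a proof by contradiction followed by induction on the order of derivatives; the two are equivalent.
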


\begin{proof}
Let $(U,V),(U',V')$ be any two pairs of bounded open sets in $\tilde{M}$ such that:
\begin{enumerate}[a.]
\item $V\subset\subset U$ and $V'\subset\subset U'$.
\item The restrictions $p_0|_U$ and $p_0|_{U'}$ are one-to-one, which implies that $p_j|_U$ and $p_j|_{U'}$ are one-to-one for any $j\geq0$.
\item$U$ is contained in the domain of a local frame $\tilde{e}_E$ of $E\to\tilde{M}$ as well as a holomorphic coordinate system $\{\xi=(\xi_1,\dots,\xi_n)\}$, while $U'$ is contained in the domain of a local frame $\tilde{e}'_E$ as well as a holomorphic coordinate system $\{\eta=(\eta_1,\dots,\eta_n)\}$.
\end{enumerate}
Hence for all $j\geq0$, we may define $e_{j,E}:=\tilde{e}_{E}\circ p^{-1}_j$ and $e'_{j,E}:=\tilde{e}'_{E}\circ p^{-1}_j$ as local frames of $E\to M_j$ over $p_j(U)$ and $p_j(U')$ respectively. Then as in (\ref{Bergman1}),
$$\tilde{\Pi}_E(z,w)=\tilde{\Phi}_E(z,w)\tilde{e}_{E}(z)\otimes\overline{\tilde{e}_E(w)},$$
and for $j\geq0$,
$$\Pi_{j,E}(p_j(z),p_j(w))=\Phi_{j,E}(p_j(z),p_j(w))e_{j,E}(p_j(z))\otimes\overline{e'_{j,E}(p_j(w))}=\Phi^*_{j,E}(z,w)\tilde{e}_{E}(z)\otimes\overline{\tilde{e}_E(w)},$$
where $\Phi^*_{j,E}(z,w):=\Phi_{j,E}(p_j(z),p_j(w))$ is also holomorphic in $z\in U$ and antiholomorphic in $w\in U'$. Take $t,t'>0$ such that for any $\xi\in V$ and $\eta\in V'$ , the coordinate polydiscs $\Pi_{i=1}^n\bar{D}(\xi_i,t)\subset U$ and $\Pi_{i=1}^n\bar{D}(\eta_i,t')\subset U'$. For arbitrary multi-indices $\al=(\al_1,\dots,\al_n)$ and $\be=(\be_1,\dots,\be_n)$, using Cauchy's integral formula, we have that for any $\xi^0\in V$ and $\eta^0\in V'$,
\begin{align*}
\begin{split}
&D^{\al}_{\xi}\bar{D}^{\be}_{\eta}\Phi^*_{j,E}(\xi^0,\eta^0) \\
=&\frac{\al!\be!}{(2\pi)^{2n}}\int_{\abs{\xi_1-\xi^0_1}=t}\cdots\int_{\abs{\xi_n-\xi^0_n}=t}\int_{\abs{\eta_1-\eta^0_1}=t'}\cdots\int_{\abs{\eta_n-\eta^0_n}=t'}
\frac{\Phi^*_{j,E}(\xi,\eta)}{\Pi_{i=1}^n(\xi_i-\xi^0_i)^{\al_i+1}(\bar{\eta}_i-\bar{\eta}^0_i)^{\be_i+1}}\ d\xi d\bar{\eta},
\end{split}
\end{align*}
which implies that
\begin{align*}
\abs{D^{\al}_{\xi}\bar{D}^{\be}_{\eta}\Phi^*_{j,E}(\xi^0,\eta^0)}\leq\frac{\al!\be!}{t^{\abs{\al}}t'^{\abs{\be}}}\norm{\Phi^*_{j,E}}_{\ccal^0(U\times U')}.
\end{align*}
Hence for any $k>0$, there exists a constant $C=C(U,V,U',V',k)>0$ such that
\begin{align*}
\norm{\Phi^*_{j,E}}_{\ccal^k(V\times V')}\leq C\norm{\Phi^*_{j,E}}_{\ccal^0(U\times U')}.
\end{align*}
Moreover, the Bergman stability assumption implies that $\left\{\Phi^*_{j,E}\right\}$ converges uniformly on $\bar{U}\times\bar{U}'$ to $\tilde{\Phi}_E$. Thus, for $j$ sufficiently large, we have
\begin{align}\label{BKCinfty1}
\norm{\Phi^*_{j,E}}_{\ccal^k(V\times V')}\leq C(\norm{\tilde{\Phi}_E}_{\ccal^0(U\times U')}+1).
\end{align}
To prove the locally uniform $\ccal^1$ convergence of the Bergman kernels, it suffices to show that the derivatives of the sequence $\left\{\Phi^*_{j,E}\right\}$ converge locally uniformly to those of $\tilde{\Phi}_E$ on $U\times U'$. If not, by taking a subsequence if necessary, we may assume that there exist some $1\leq i\leq n$, compact sets $K\subset U, K'\subset U'$ and $\ep>0$ such that
\begin{align}\label{BKCinfty2}
\sup_{K\times K'}\Abs{\frac{\pa\Phi^*_{j,E}}{\pa\xi_i}-\frac{\pa\tilde{\Phi}_E}{\pa\xi_i}}\geq\ep\text{ \quad for all }j\geq0.
\end{align}
However, since $\{\frac{\pa\Phi^*_{j,E}}{\pa\xi_i}\}$ and their derivatives are uniformly bounded on $K\times K'$ by (\ref{BKCinfty1}), applying Arzel$\grave{a}$-Ascoli theorem, we then have a subsequence $\{\frac{\pa\Phi^*_{j_s,E}}{\pa\xi_i}\}$ converges uniformly on $K\times K'$. As $\left\{\Phi^*_{j_s,E}\right\}$ converges uniformly to $\tilde{\Phi}_E$ on $K\times K'$, $\{\frac{\pa\Phi^*_{j_s,E}}{\pa\xi_i}\}$ must converge uniformly to $\frac{\pa\tilde{\Phi}_E}{\pa\xi_i}$, which contradicts to (\ref{BKCinfty2}). Thus we prove the locally uniform $\ccal^1$ convergence of the Bergman kernels. The higher order convergence follows by induction.
\end{proof}

\begin{prop}\label{BM}
If the tower of normal coverings with line bundles $\{(M_j,E)\}$ is Bergman stable and $\tilde{B}_E=\emptyset$, then there exists $J\geq0$ such that for $j\geq J$, the base loci $B_{j,E}=\emptyset$ and the Bergman metrics $\Om_{j,E}$ can be defined all over $M_j$. Moreover, $\{\underline{\Om}_{j,E}\}_{j=J}^\infty$ converges to $\underline{\tilde{\Om}}_E$ uniformly in $\ccal^\infty$ topology.
\end{prop}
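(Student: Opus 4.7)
The plan is to pass from the $\ccal^\infty$ Bergman stability established in Proposition \ref{BKCinfty} to uniform $\ccal^\infty$ convergence of the diagonal restrictions on the compact manifold $M$, then exploit strict positivity of the limit to conclude emptiness of the base loci, and finally invoke smoothness of $\log$ away from zero to conclude $\ccal^\infty$ convergence of the Bergman metrics.

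First I would cover $M$ by finitely many small open sets $U_\a$ each admitting a local holomorphic frame $e_{\a,E}$ of $E\to M$ and each evenly covered by $p_0$ (and automatically by every $q_j$). Pulling back $e_{\a,E}$ by $p_0$ and $q_j$ yields compatible local frames on every sheet of $\tilde M$ and $M_j$ above $U_\a$. With respect to these frames, the non-negative local coefficient functions $\phi_{j,E}$ on $M_j$ and $\tilde\phi_E$ on $\tilde M$ are invariant under $\Ga/\Ga_j$ and $\Ga$ respectively, and hence descend to smooth non-negative functions $\underline\phi_{j,E},\underline{\tilde\phi}_E$ on each $U_\a$. Proposition \ref{BKCinfty} then gives $\phi_{j,E}\circ p_j\to\tilde\phi_E$ locally uniformly in $\ccal^\infty$ on $\tilde M$; composing with any local section of $p_0$ over $U_\a$ transfers this to $\underline\phi_{j,E}\to\underline{\tilde\phi}_E$ uniformly in $\ccal^\infty$ on $U_\a$. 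Since the cover is finite and $M$ is compact, the convergence is uniform in $\ccal^\infty$ on all of $M$.

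The assumption $\tilde B_E=\emptyset$ gives $\tilde\phi_E>0$ on $\tilde M$ and hence $\underline{\tilde\phi}_E>0$ on the compact manifold $M$, so there exists $c>0$ with $\underline{\tilde\phi}_E\geq 2c$ globally. The uniform $\ccal^0$ convergence above then supplies $J\geq 0$ such that $\underline\phi_{j,E}\geq c$ on $M$ for all $j\geq J$; lifting by $q_j$ yields $\phi_{j,E}\geq c>0$ on $M_j$, so $B_{j,E}=\emptyset$ and $\Om_{j,E}=\frac{\sqrt{-1}}{2}\pa\bar\pa\log\phi_{j,E}$ is a well-defined smooth $(1,1)$-form on all of $M_j$ for $j\geq J$.

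For the final convergence, I observe that for $j\geq J$ both $\underline\phi_{j,E}$ and $\underline{\tilde\phi}_E$ take values in a fixed interval $[c,\infty)$ on which $\log$ is $\ccal^\infty$, so a standard composition argument (all derivatives of $\log u$ are polynomial expressions in derivatives of $u$ and in $u^{-1}$, each of which converges uniformly once $u$ does and is bounded away from zero) yields $\log\underline\phi_{j,E}\to\log\underline{\tilde\phi}_E$ uniformly in $\ccal^\infty$ on $M$. Applying $\frac{\sqrt{-1}}{2}\pa\bar\pa$ gives $\underline\Om_{j,E}\to\underline{\tilde\Om}_E$ uniformly in $\ccal^\infty$ on $M$. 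The only mild technical point is arranging frames and covers compatible across the three levels of the tower; once this bookkeeping is set up, every step follows formally from Proposition \ref{BKCinfty}, strict positivity of the limit, and compactness of $M$.
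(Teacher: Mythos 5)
Your proof is correct and takes essentially the same approach as the paper: pass from $\ccal^\infty$ Bergman stability (Proposition \ref{BKCinfty}) through compactness and the strict positivity coming from $\tilde B_E=\emptyset$ to a uniform lower bound on the diagonal kernel, then conclude emptiness of $B_{j,E}$ and $\ccal^\infty$ convergence of the metrics. The only cosmetic difference is that you work with a finite cover of $M$ by evenly-covered coordinate charts, whereas the paper uses a single precompact fundamental domain $\bar D_0\subset\tilde M$ with $p_0(\bar D_0)=M$; these serve the same purpose.
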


\begin{proof}
Let$D_0\subset\subset\tilde{M}$ be a fundamental domain corresponding to $M_0=M$, i.e. it satisfies $p_0|_{D_0}:D_0\to M$ is injective and $p_0|_{\bar{D}_0}:\bar{D}_0\to M$ is surjective. Since $\{(M_j,E)\}$ is Bergman stable and $\tilde{\Pi}_E$ is nonvanishing on the diagonal, for the compact set $\bar{D}_0\subset\tilde{M}$, there exists $J\geq0$ such that for any $j\geq J$, $\Pi_{j,E}(p_j(z),p_j(z))\neq0$ for $z\in\bar{D}_0$. Hence $B_{j,E}\cap p_j(\bar{D}_0)=\emptyset$ for $j\geq J$ and thus $\underline{B}_{j,E}\cap p_0(\bar{D}_0)=q_j(B_{j,E})\cap q_jp_j(\bar{D}_0)=\emptyset$. Since $p_0(\bar{D}_0)=M$, $\underline{B}_{j,E}=\emptyset$. Therefore $B_{j,E}=q_j^{-1}(\underline{B}_{j,E})=\emptyset$ for $j\geq J$. The remaining part of this proposition follows from the definition of Bergman metric (\ref{Bergman2}) and Proposition \ref{BKCinfty}.
\end{proof}

Proposition \ref{EZ} then follows from the standard arguments as in \cite{SZ1}.

\begin{proof}[Proof of Proposition \ref{EZ}]
The first part of this statement follows from Proposition \ref{BM} and Lemma 4.3 in \cite{SZ1}. The second part also follows from Proposition \ref{BM}.
\end{proof}

\section{Positive Line Bundle $L$ over a Tower of Coverings}

The following geometric quantity describes the profound geometry of the tower of coverings, which was first appeared in \cite{DW}.

\begin{definition}\label{tau}
For any $j\geq0$,
\begin{align}\label{tau0}
\tau_j=\inf\left\{dist(z,\gamma_jz):z\in\tilde{M},\gamma_j\in\Gamma_j\setminus\{1\}\right\}.
\end{align}
\end{definition}

One can check that $\tau_j\geq2R_j$, where $R_j$ denotes the injectivity radius of $M_j$. It is easy to see that for all $z\in\tilde{M}$, $p_j|_{B(z,\half\tau_j)}$ is one-to-one, where $B(z,\half\tau_j)$ denotes the geodesic ball in $\tilde{M}$ centered at $z$ of radius $\half\tau_j$.
For all $j\geq0$, $\Gamma_j\setminus\{1\}\supset\Gamma_{j+1}\setminus\{1\}$. Hence the sequence $\{\tau_j\}$ is nondecreasing. The following lemma describes the growth of $\tau_j$, which was obtained in \cite{DW} \cite{Don1}. 

\begin{lem}\label{tau}
$\di\lim_{j\to\infty}\tau_j=\infty$.
\end{lem}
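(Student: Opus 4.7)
The plan is to argue by contradiction: suppose $\{\tau_j\}$ is bounded above by some $R > 0$. Then by the definition of infimum, for each $j$ there exist $z_j \in \tilde{M}$ and $\gamma_j \in \Gamma_j \setminus \{1\}$ such that $dist(z_j, \gamma_j z_j) \leq R + 1$. The goal is to produce, out of this data, a single nontrivial element of $\Gamma$ that lies in every $\Gamma_j$, contradicting $\bigcap_{j=0}^\infty \Gamma_j = \{1\}$.

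The first step is to exploit cocompactness of $\Gamma \curvearrowright \tilde{M}$ to bring every $z_j$ into a fixed compact set. Let $D_0 \subset\subset \tilde{M}$ be a fundamental domain as in the proof of Proposition \ref{BM}, so that $\bar{D}_0$ is compact and $\Gamma \cdot \bar{D}_0 = \tilde{M}$. Pick $\alpha_j \in \Gamma$ with $w_j := \alpha_j z_j \in \bar{D}_0$, and set $\gamma_j' := \alpha_j \gamma_j \alpha_j^{-1}$. Because each $\Gamma_j$ is \emph{normal} in $\Gamma$, we still have $\gamma_j' \in \Gamma_j \setminus \{1\}$. Since $\alpha_j$ acts as an isometry,
\begin{equation*}
dist(w_j, \gamma_j' w_j) = dist(\alpha_j z_j, \alpha_j \gamma_j z_j) = dist(z_j, \gamma_j z_j) \leq R+1.
\end{equation*}
Thus both $w_j$ and $\gamma_j' w_j$ lie in the compact set $K := \overline{B(\bar{D}_0, R+1)}$ (compact because $\tilde{M}$ is complete with a compact quotient, hence proper).

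Now I apply proper discontinuity of the action $\Gamma \curvearrowright \tilde{M}$: the set $F := \{\gamma \in \Gamma : \gamma K \cap K \neq \emptyset\}$ is finite, and every $\gamma_j'$ lies in $F$ since $\gamma_j' w_j \in K \cap \gamma_j' \bar{D}_0 \subset K \cap \gamma_j' K$. By the pigeonhole principle, some fixed $\gamma^* \in F$ equals $\gamma_j'$ for infinitely many $j$. Because the tower is nested, $\gamma^* \in \Gamma_j$ for infinitely many $j$ forces $\gamma^* \in \Gamma_j$ for \emph{all} $j$, hence $\gamma^* \in \bigcap_{j=0}^\infty \Gamma_j = \{1\}$. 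But $\gamma^* = \gamma_j' \neq 1$, a contradiction. Therefore $\tau_j \to \infty$.

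The only point that requires genuine care is the conjugation step: one must have $\gamma_j' \in \Gamma_j$, and this is exactly where normality of the tower $\{\Gamma_j\}$ is used. Everything else is a routine combination of cocompactness, proper discontinuity and pigeonhole, so I expect no further obstacle.
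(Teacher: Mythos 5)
Your proof is correct and follows the same contradiction strategy as the paper: bound $\tau_j$, conjugate the displaced points into a fixed compact fundamental domain (using normality of the $\Gamma_j$, which both arguments correctly flag as the essential hypothesis), and then combine proper discontinuity with $\bigcap_j\Gamma_j=\{1\}$. The one place you diverge is the endgame: the paper extracts two convergent subsequences ($z'_{j_k}\to z^*$ and $\gamma'_{j_k}z^*\to w$), identifies a deck transformation $h$ with $z^*=hw$, and then uses free, proper discontinuity near $w$ to force $\gamma'_{j_k}h=1$; you instead observe directly that every $\gamma'_j$ lies in the finite set $\{\gamma\in\Gamma:\gamma K\cap K\neq\emptyset\}$ and pigeonhole. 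Your version is shorter and avoids the subsequence machinery while reaching the same contradiction; the two arguments are otherwise the same.
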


\begin{proof}
Argue by contradiction. We assume that there exists $C>0$ such that for all $j\geq0$, $\tau_j\leq C$. Then for all $j\geq0$, there exist $z_j\in\tilde{M}$ and $\gamma_j\in\Gamma_j\setminus\{1\}$ with $dist(z_j,\gamma_jz_j)\leq2C$. Also we know that $p_0|_{\bar{D}_0}$ is surjective, where $D_0\subset\subset\tilde{M}$ is the fundamental domain described in the proof of Proposition \ref{BM}, and for all $j\geq0$, $p_0^{-1}(p_0(z_j))=\Gamma z_j$. Hence for all $j$, there exists $g_j\in\Gamma$ such that $g_jz_j\in\bar{D}_0$. Denote $z'_j=g_jz_j\in\bar{D}_0$ and $\gamma'_j=g_j\gamma_jg_j^{-1}\in\Gamma_j\setminus\{1\}$ since $\Gamma_j$ is a normal subgroup of $\Gamma$, we then have $dist(z'_j,\gamma'_jz'_j)=dist(z_j,\gamma_jz_j)\leq2C$ as $g_j\in\Gamma$ is an isometry on $\tilde{M}$. By the compactness of $\bar{D}_0$, there exists a subsequence $\{j_k\}$ such that $z'_{j_k}\to z^*\in\bar{D}_0$. Since $dist(z^*,\gamma'_{j_k}z^*)\leq dist(z^*,z'_{j_k})+dist(z'_{j_k},\gamma'_{j_k}z'_{j_k})+dist(\gamma'_{j_k}z'_{j_k},\gamma'_{j_k}z^*)\leq2dist(z^*,z'_{j_k})+2C$, there exists some $K>0$ with $\{\gamma'_{j_k}z^*\}_{k=K}^{\infty}\subset\bar{B}(z^*,3C)$. Choosing a subsequence again if necessary, we may assume $\gamma'_{j_k}z^*\to w\in\bar{B}(z^*,3C)$. Thus $p_0(z^*)=p_0(\gamma'_{j_k}z^*)\to p_0(w)$, i.e. $p_0(z^*)=p_0(w)$, which implies that there exists some $h\in\Gamma$ such that $z^*=hw$. Hence $\gamma'_{j_k}hw\to w$. Since the group action $\Gamma\curvearrowright\tilde{M}$ is properly discontinuous, $\gamma'_{j_k}h=1$ for $k$ large enough. Therefore $h\in\Gamma_{j_k}$ for $k$ large. But we know $\bigcap_{j=0}^\infty\Ga_j=\{1\}$, so $h=1$. Thus $\gamma'_{j_k}=1$ for $k$ large, which draws a contradiction since $\gamma'_{j_k}\in\Gamma_{j_k}\setminus\{1\}$.
\end{proof}

Hence we can assume that $\tau_0\geq2$ and now begin the proof of Theorem \ref{BK}



\begin{proof}[Proof of Theorem \ref{BK}]
Similar as in the proof of Proposition \ref{BKCinfty}, we can take bounded open sets $U, U'\subset\tilde{M}$ satisfying conditions $(b)$ and $(c)$, local frames $\tilde{e}_L$ and $\tilde{e}'_L$ over $U$ and $U'$, $e_{j,L}=\tilde{e}_L\circ (p_j|_U)^{-1}$ and $e'_{j,L}=\tilde{e}'_L\circ (p_j|_{U'})^{-1}$ over $p_j(U)$ and $p_j(U')$. Under these local frames, we adopt similar notations to write: for any $z\in U, w\in U'$,
\begin{align*}
\begin{split}
\Pi_{j,L^N}(p_j(z),p_j(w))=&\Phi_{j,L^N}(p_j(z),p_j(w))e^N_{j,L}(p_j(z))\otimes\overline{e'^N_{j,L}(p_j(w))} \\
=&\Phi_{j,L^N}(p_j(z),p_j(w))\tilde{e}^N_L(z)\otimes\overline{\tilde{e}'^N_L(w)},
\end{split}
\end{align*}
and
\begin{align*}
\tilde{\Pi}_{L^N}(z,w)=\tilde{\Phi}_{L^N}(z,w)\tilde{e}^N_L(z)\otimes\overline{\tilde{e}'^N_L(w)}.
\end{align*}
Hence, by (\ref{Szego1}) and (\ref{Szego2}),
\begin{align*}
\abs{\Pi_{j,L^N}(p_j(z),p_j(w))-\tilde{\Pi}_{L^N}(z,w)}_{h^N}=\abs{\hat\Pi_{j,N}(p_j(z),\theta,p_j(w),\varphi)-\hat{\tilde{\Pi}}_N(z,\theta,w,\varphi)},
\end{align*}
for any $\theta,\varphi\in[0,2\pi]$. On the other hand, take any $N\geq N_0$, then (\ref{LZThm}) holds for all $j\geq0$. By triangle inequality, we have: for all $x,y\in\tilde{X}$,
\begin{align*}
\abs{\hat\Pi_{j,N}(\hat{p}_j(x),\hat{p}_j(y))-\hat{\tilde{\Pi}}_N(x,y)}\leq\sum_{\ga_j\in\Ga_j\setminus\{1\}}\abs{\hat{\tilde{\Pi}}_N(\ga_jx,y)}.
\end{align*}
Take any $z\in U,w\in U'$ and let $x=(z,\theta),y=(w,\varphi)\in\tilde{X}$ in terms of the local frames $\tilde{e}_L,\tilde{e}'_L$, by (\ref{Szego3}), (\ref{Szego4}) and (\ref{Szego5}), the inequality above implies
\begin{align*}
\abs{\hat\Pi_{j,N}(p_j(z),\theta,p_j(w),\varphi)-\hat{\tilde{\Pi}}_N(z,\theta,w,\varphi)}
\leq\sum_{\ga_j\in\Ga_j\setminus\{1\}}\abs{\tilde{\Pi}_{L^N}(\ga_jz,w)}_{h^N}.
\end{align*}
Hence we obtain
\begin{align*}
\abs{\Pi_{j,L^N}(p_j(z),p_j(w))-\tilde{\Pi}_{L^N}(z,w)}_{h^N}\leq\sum_{\ga_j\in\Ga_j\setminus\{1\}}\abs{\tilde{\Pi}_{L^N}(\ga_jz,w)}_{h^N}.
\end{align*}
As $\{\tau_j\}$ increases to $\infty$, for any compact sets $K\subset U,K'\subset U'$, there exists $a_{K,K'}\in\N$ such that
$$\half\tau_{a_{K,K'}}\geq\sup_{z\in K,w\in K'}dist(z,w).$$
Thus for any $j\geq a_{K,K'}$, $z\in K,w\in K'$ and $\ga_j\in\Ga_j\setminus\{1\}$,
\begin{align*}
dist(\ga_jz,w)\geq dist(\ga_jz,z)-dist(z,w)\geq\tau_j-\half\tau_{a_{K,K'}}\geq\half\tau_j\geq1.
\end{align*}
Then by Agmon estimates, there exists $\beta=\beta(M,L,h)>0$ such that
\begin{align*}
\begin{split}
&\sum_{\ga_j\in\Ga_j\setminus\{1\}}\abs{\tilde{\Pi}_{L^N}(\ga_jz,w)}_{h^N} \\
\leq&\sum_{k=0}^{\infty}\sharp\left\{\ga_j\in\Ga_j\setminus\{1\}:(k+\half)\tau_j\leq dist(\ga_jz,w)<(k+\frac{3}{2})\tau_j\right\}e^{-\be\sqrt{N}(k+\half)\tau_j} \\
\leq&\sum_{k=0}^{\infty}\sharp\left\{\ga_j\in\Ga_j:\ga_jz\in B(w,(k+\frac{3}{2})\tau_j)\right\}e^{-\be\sqrt{N}(k+\half)\tau_j}.
\end{split}
\end{align*}
Whenever $\ga_jz\in B(w,(k+\frac{3}{2})\tau_j)$,
$$B(\ga_jz,\half\tau_j)\subset B(w,(k+2)\tau_j).$$
Also from the definition of $\tau_j$,
$$B(\ga z,\half\tau_j)\cap B(\ga'z,\half\tau_j)=\emptyset\text{\quad for }\ga,\ga'\in\Ga_j,\ \ga\neq\ga'.$$
Furthermore, since $\Ga$ acts isometrically on $\tilde{M}$,
$$V(B(\ga z,\half\tau_j))=V(B(z,\half\tau_j)), \text{\quad for all }\ga\in\Ga.$$
Hence for any $j\geq a_{K,K'}$,
\begin{align*}
\begin{split}
&\sharp\left\{\ga_j\in\Ga_j:\ga_jz\in B(w,(k+\frac{3}{2})\tau_j)\right\} \\
&\leq\frac{V(B(w,(k+2)\tau_j))}{V(B(z,\half\tau_j))} \\
&\leq\frac{V(B(w,(k+2)\tau_j))}{V(B(z,\half\tau_{a_{K,K'}}))} \\
&=\frac{V(B(w,\half\tau_{a_{K,K'}}))}{V(B(z,\half\tau_{a_{K,K'}}))}\frac{V(B(w,(k+2)\tau_j))}{V(B(w,\half\tau_{a_{K,K'}}))}.
\end{split}
\end{align*}
Since Ricci curvature of the compact manifold $M$ is bounded, there exists $K>0$ such that $Ric(M)\geq-(2n-1)K$. Therefore $Ric(\tilde{M})\geq-(2n-1)K$. By Bishop-Gromov volume comparison theorem (cf. \cite{SY} pp.11), if $V(2n,-K,R)$ denotes the volume of the geodesic balls of radius R in the space form $M^{2n}_{-K}$ of constant sectional curvature $-K$, then for all $j\geq a_{K,K'}$ and $k\geq0$,
\begin{align*}
\frac{V(B(w,(k+2)\tau_j))}{V(2n,-K,(k+2)\tau_j)}\leq\frac{V(B(w,\half\tau_{a_{K,K'}}))}{V(2n,-K,\half\tau_{a_{K,K'}})} \
\Rightarrow\ \frac{V(B(w,(k+2)\tau_j))}{V(B(w,\half\tau_{a_{K,K'}}))}\leq\frac{V(2n,-K,(k+2)\tau_j)}{V(2n,-K,\half\tau_{a_{K,K'}})}.
\end{align*}
We may rescale the metric on the space form $M_{-K}^{2n}$ by $K$ and get the space form $M_{-1}^{2n}$, while the volume ratio remains unchanged. Hence
\begin{align*}
\frac{V(B(w,(k+2)\tau_j))}{V(B(w,\half\tau_{a_{K,K'}}))}\leq\frac{V(2n,-1,(k+2)\tau_j)}{V(2n,-1,\half\tau_{a_{K,K'}})},
\end{align*}
\begin{align*}
\Rightarrow\sharp\left\{\ga_j\in\Ga_j:\ga_jz\in B(w,(k+\frac{3}{2})\tau_j)\right\}\leq\frac{V(B(w,\half\tau_{a_{K,K'}}))}{V(B(z,\half\tau_{a_{K,K'}}))}\frac{V(2n,-1,(k+2)\tau_j)}{V(2n,-1,\half\tau_{a_{K,K'}})}.
\end{align*}
For all $R>0$, by an explicit formula (cf.\cite{SY} pp.9),
\begin{align*}
\begin{split}
V(2n,-1,R)&=\si_{2n-1}\int_0^R(\sinh t)^{2n-1}\ dt \\
&=\si_{2n-1}\int_0^R(\cosh^2t-1)^{n-1}\ d(\cosh t) \\
&=\si_{2n-1}\int_1^{\cosh R}(u^2-1)^{n-1}\ du \\
&\leq\si_{2n-1}\int_0^{e^R}u^{2n-2}\ du \\
&=\frac{\si_{2n-1}}{2n-1}e^{(2n-1)R},
\end{split}
\end{align*}
here $\si_{2n-1}$ denotes the Euclidean volume of the unit sphere $S^{2n-1}\subset\R^{2n}$. Since $K,K'\subset\tilde{M}$ are compact, there exists a constant $\tilde{C}_{K,K'}>0$ depending on $K,K'$, such that
\begin{align*}
\frac{V(B(w,\half\tau_{a_{K,K'}}))}{V(B(z,\half\tau_{a_{K,K'}}))}\leq\tilde{C}_{K,K'},\text{ for all }z\in K,w\in K'.
\end{align*}
Take $\hat{C}_{K,K'}=\frac{\si_{2n-1}\tilde{C}_{K,K'}}{(2n-1)V(2n,-1,\half\tau_{a_{K,K'}})}$, then
\begin{align*}
\sharp\left\{\ga_j\in\Ga_j:\ga_jz\in B(w,(k+\frac{3}{2})\tau_j)\right\}\leq\hat{C}_{K,K'}e^{(2n-1)(k+2)\tau_j},
\end{align*}
for all $z\in K,w\in K'$ and $j\geq a_{K,K'}$.
Therefore,
\begin{align*}
\begin{split}
&\abs{\Pi_{j,L^N}(p_j(z),p_j(w))-\tilde{\Pi}_{L^N}(z,w)}_{h^N} \\
\leq&\hat{C}_{K,K'}\sum_{k=0}^{\infty}e^{(2n-1)(k+2)\tau_j}e^{-\be\sqrt{N}(k+\half)\tau_j} \\
=&\hat{C}_{K,K'}e^{(4n-2-\half\be\sqrt{N})\tau_j}\sum_{k=0}^{\infty}(e^{(2n-1-\be\sqrt{N})\tau_j})^k.
\end{split}
\end{align*}
Denote $N_1=N_1(M,L,h)=\max\{\lfloor(\frac{8n-2}{\be(M,L,h)})^2\rfloor+1,N_0(M,L,h)\}$. Hence when $N\geq N_1$, $\si=\si(M,L,h,N)=-(4n-2-\half\be\sqrt{N})\geq1$ and $2n-1-\be\sqrt{N}\leq-\half\si$. For all $z\in K,w\in K'$ and $j\geq a_{K,K'}$,
\begin{align*}
\abs{\Pi_{j,L^N}(p_j(z),p_j(w))-\tilde{\Pi}_{L^N}(z,w)}_{h^N}\leq\hat{C}_{K,K'}\frac{e^{-\si\tau_j}}{1-e^{-\half\si\tau_j}}\leq\frac{\hat{C}_{K,K'}}{1-e^{-\si}}e^{-\si\tau_j},
\end{align*}
since $\tau_j\geq\tau_0\geq2$. Take $C_{K,K'}=\frac{\hat{C}_{K,K'}}{1-e^{-\si}}$ and we get the desired estimate. Then Bergman stability follows from Lemma \ref{tau}.
\end{proof}



\medskip

\begin{rmk}
Let $(M, \omega)$ be a compact symplectic manifold of real dimension $2n$ with $\frac{\omega}{\pi}$ an integral cohomology class, $L\rightarrow M$ be a Hermitian line bundle such that the curvature $\om_h=\frac{\sqrt{-1}}{2}\Theta_h = \omega$ and $J$ be an almost complex structure compatible with $\omega$. Suppose that $(M, \omega)$ admits a tower of normal coverings. Denote the Bergman kernel of $D_j$ (respectively $\tilde D$), i.e. the Schwartz kernel for the orthogonal projection from $L^2_J(M_j, L^N)$ (respectively $L^2_J(\tilde M, L^N)$) onto the ($L^2$) kernel of $D_j$ (respectively, of $\tilde D$) on $M_j$ (respectively, $\tilde M$), by $\Pi_{j,L^N}(\cdot, \cdot)$ (respectively $\tilde\Pi_{L^N}(\cdot, \cdot)$), where $D_j$ (respectively, $\tilde D$) is the pseudodifferential operator so that $D_j \Pi_{j, L^N}=0$ (respectively $\tilde D \tilde\Pi_{L^N}=0$). (cf. \cite{MM1} \cite{SZ2})
Then by applying Theorem 0.1 and 0.2 in \cite{MM2} and the same argument as above, the analogue conclusion to Theorem \ref{BK} in the symplectic geometric setting also holds. 

\end{rmk}

\begin{rmk}
As pointed out by the referee, if the tensor power $N$ of the line bundle $L$ is sufficiently large, then a tower of coverings (not necessarily normal) with line bundles $(M_j, L^N)$ is Bergman stable. The proof follows from the heat kernel argument in \cite{Don2} (cf. section 1). In Donnelly's proof, the assumption is   that
the smallest nonzero eigenvalue of the Hodge-Laplacian is uniformly bounded below by a positive constant (independent of $j$), while this assumption is always true if $N$ is sufficiently large by the Bochner-Kodaira identity. On the other hand, with the assumption of the normal coverings, the Bergman stability without the effective estimates follows also from the standard H\"ormander type $L^2$-estimates and the estimate of $\tau_j$ (cf. Proposition \ref{tau}), which we attach in the Appendix.
\end{rmk}

\medskip

Theorem \ref{BL} follows from a very similar argument of $L^2$-estimates as in the proof of second part of Proposition \ref{bsne} in Appendix. Hence we skip the proof here.

\begin{rmk}
Since $\tilde{B}_{L^N}=\emptyset$ for $N\geq N_2$, Proposition \ref{BM} implies that $B_{j,L^N}=\emptyset$ for $j$ sufficiently large. In fact, as $L\to M$ is an ample line bundle, we may choose some $N_3=N_3(M,L)>0$ so that $L^N\to M$ is very ample for all $N\geq N_3$, hence base point free. By pulling back holomorphic sections from $M$ to $M_j$, we are able to show that $B_{j,L^N}=\emptyset$ for all $j\geq0$ if $N\geq N_3$.
\end{rmk}

\begin{proof}[Proof of Corollary \ref{EZ(L)}]
This follows directly from Proposition \ref{EZ}, Theorem \ref{BK} and Theorem \ref{BL}.
\end{proof}

\section{Variance Estimate and Almost Sure Convergence}

In this section, we will derive the variance estimate. The essential ingredient is still the theorem of Poincar\'{e} series in \cite{LZ}. We also rely on  the deep explicit formula for the variance in \cite{SZ4}.

\begin{proof}[Proof of Theorem \ref{Variance}]
We only consider those $j\geq J$, where $J$ is mentioned in Proposition \ref{BM} for $E=L^N$. Taking a partition of unity if necessary, we may assume that $\supp(\psi)\subset U$ for some open set $U\subset M$, which is the domain of some local frame $e_L$ of $L\to M$. Then $e_{j,L}:=e_L\circ q_j$ is a local frame of $L\to M_j$ over $q_j^{-1}(U)$. Moreover, by making $U$ even smaller, it is also possible to assume that $p^{-1}_0(U)$ is the disjoint union of $\ga \tilde{U}$'s for all $\ga\in\Ga$, where $\tilde{U}\subset\tilde{M}$ is such that $p_0|_{\tilde{U}}$ is one-to-one. Denote $p_j(\tilde{U})=U_j$. Hence $q^{-1}_j(U)$ is the union of $[\ga]_jU_j$'s for all $[\ga]_j\in\Ga/\Ga_j$ and $q_j|_{U_j}$ is one-to-one. Thereafter, for $z,w\in U$, we would use $z_j,w_j\in U_j$ and $\tilde{z},\tilde{w}\in\tilde{U}$ to denote their preimages. Choosing an orthonormal basis $\{S_{j_k}\}_{k=1}^{d_{j,L^N}}$ of $H^0(M_j,L^N)$, we assume that for $1\leq k\leq d_{j,L^N}$, $S_{j_k}=f_{j_k}e^N_{j,L}$ over $q_j^{-1}(U)$ for some $f_{j_k}\in\ocal(q_j^{-1}(U))$. Write $f_j=(f_{j_1},\dots,f_{j_{d_{j,L^N}}})$. Hence $\sqrt{-1}\pa\bar{\pa}\log\abs{f_j}=\Om_{j,L^N}$ over $q_j^{-1}(U)$ when $j\geq J$. For any $s_j\in SH^0(M_j,L^N)$, suppose $s_j=\sum_{k=1}^{d_{j,L^N}}a_kS_{j_k}$ for some $a=(a_1,\dots,a_{d_{j,L^N}})\in S^{2d_{j,L^N}-1}\subset\C^{d_{j,L^N}}$. Then over $q_j^{-1}(U)$, $s_j=(\sum_{k=1}^{d_{j,L^N}}a_kf_{j_k})e^N_{j,L}=\langle a,\bar{f}_j\rangle e^N_{j,L}$. By Poincar\'{e}-Lelong formula, over $q_j^{-1}(U)$, the zero current
\begin{align*}
Z_{s_j}=\frac{\sqrt{-1}}{\pi}\pa\bar{\pa}\log\abs{\langle a,\bar{f}_j\rangle}=\frac{\sqrt{-1}}{\pi}\pa\bar{\pa}\log\abs{\langle a,u_j\rangle}+\frac{\sqrt{-1}}{\pi}\pa\bar{\pa}\log\abs{f_j}=\frac{\sqrt{-1}}{\pi}\pa\bar{\pa}\log\abs{\langle a,u_j\rangle}+\pi^{-1}\Om_{j,L^N},
\end{align*}
where $u_j(z):=\frac{\overline{f_j(z)}}{\abs{f_j(z)}}\in S^{2d_{j,L^N}-1}$.
Then by (\ref{Bergman5}),
\begin{align*}
\underline{Z}_{s_j}-\pi^{-1}\underline{\Om}_{j,L^N}=I_j^{-1}{q_j}_*Z_{s_j}-\pi^{-1}\underline{\Om}_{j,L^N}=I_j^{-1}{q_j}_*\frac{\sqrt{-1}}{\pi}\pa\bar{\pa}\log\abs{\langle a,u_j\rangle}.
\end{align*}
Therefore,
\begin{align*}
\begin{split}
(\underline{Z}_{s_j}-\pi^{-1}\underline{\Om}_{j,L^N},\psi)=&(I_j^{-1}{q_j}_*\frac{\sqrt{-1}}{\pi}\pa\bar{\pa}\log\abs{\langle a,u_j\rangle},\psi) \\
=&(I_j^{-1}\pi^{-1}{q_j}_*\log\abs{\langle a,u_j\rangle},\sqrt{-1}\pa\bar{\pa}\psi) \\
=&\int_{M}(\sqrt{-1}\pa\bar{\pa}\psi(z))(I_j^{-1}\sum_{[\ga]_j\in\Ga/\Ga_j}\pi^{-1}\log\abs{\langle a,u_j([\ga]_jz_j)\rangle}).
\end{split}
\end{align*}
We denote the normalized Haar measure on the sphere $S^{2d_{j,L^N}-1}$ by $\nu_{2d_{j,L^N}-1}$. Following the proof in Theorem 3.1 of \cite{SZ4}, one can show that
\begin{align*}
\begin{split}
&\int_{SH^0(M_j,L^N)}\abs{(\underline{Z}_{s_j}-\pi^{-1}\underline{\Om}_{j,L^N},\psi)}^2\ d\nu_{j,L^N}(s_j) \\
=&\int_{M\times M}(\sqrt{-1}\pa\bar{\pa}\psi(z))(\sqrt{-1}\pa\bar{\pa}\overline{\psi(w)}) \\
&\times I_j^{-2}\sum_{[\ga]_j,[\ga']_j\in\Ga/\Ga_j}\pi^{-2}\int_{S^{2d_{j,L^N}-1}}\log\abs{\langle a,u_j([\ga]_jz_j)\rangle}\log\abs{\langle a,u_j([\ga']_jw_j)\rangle}\ d\nu_{2d_{j,L^N}-1}(a) \\
=&\int_{M\times M}(\sqrt{-1}\pa\bar{\pa}\psi(z))(\sqrt{-1}\pa\bar{\pa}\overline{\psi(w)})I_j^{-2}\sum_{[\ga]_j,[\ga']_j\in\Ga/\Ga_j}\tilde{G}(P_{j,L^N}([\ga]_jz_j,[\ga']_jw_j)) \\
= &\int_{M\times M}(\sqrt{-1}\pa\bar{\pa}\psi(z))(\sqrt{-1}\pa\bar{\pa}\overline{\psi(w)})I_j^{-1}\sum_{[\ga]_j\in\Ga/\Ga_j}\tilde{G}(P_{j,L^N}([\ga]_jz_j,w_j)),
\end{split}
\end{align*}
where
\begin{align*}
P_{j,L^N}(z_j, w_j)
:=&\frac{\abs{\Pi_{j,L^N}(z_j, w_j)}_{h^N}}
{\sqrt{\abs{\Pi_{j,L^N}(z_j, z_j)}_{h^N}}\sqrt{\abs{\Pi_{j,L^N}(w_j, w_j)}_{h^N}}}
\end{align*}
denotes the normalized Bergman kernel of $\Pi_{j,L^N}$ for $j\geq J$ when the denominator never vanishs and the last equality follows from the symmetry. In \cite{SZ4}, Shiffman and Zelditch introduce the function
\begin{align*}
\tilde{G}(t)=-\frac{1}{4\pi^2}\int_0^{t^2}\frac{\log(1-s)}{s}\ ds
\end{align*}
 to calculate the variance and
moreover, they write down the explicit expression of $\tilde{G}$ using power series $$\tilde{G}(t)=\frac{1}{4\pi^2}\sum_{n=1}^\infty\frac{t^{2n}}{n^2},$$ which plays an essential role in our estimate. By the power series expression we have
\begin{align}\label{Variance1}
\tilde{G}(t)\leq\frac{t^2}{24},\qquad\text{ for } 0\leq t\leq1.
\end{align}
Hence by (\ref{Variance1}) and recalling that the denominators of $P_{j,L^N}$'s are bounded from below by a uniform positive constant for $j\geq J$, it follows that
\begin{align}\label{Variance3}
\begin{split}
&\int_{SH^0(M_j,L^N)}\abs{(\underline{Z}_{s_j}-\pi^{-1}\underline{\Om}_{j,L^N},\psi)}^2\ d\nu_{j,L^N}(s_j) \\
\lesssim&\sup_{(z_j,w_j)\in U_j\times U_j}I_j^{-1}\sum_{[\ga]_j\in\Ga/\Ga_j}\abs{\Pi_{j,L^N}([\ga]_jz_j,w_j)}^2_{h^N}\norm{\sqrt{-1}\pa\bar{\pa}\psi}^2_{L^1(M)}.
\end{split}
\end{align}
From now on, for any fixed $(z,w)\in U\times U$ (thus the pairs $(z_j,w_j)\in U_j\times U_j$ and $(\tilde{z},\tilde{w})\in\tilde{U}\times\tilde{U}$ are determined), we always choose the representative $\ga$ of the coset $[\ga]_j\in\Ga/\Ga_j$ such that $dist(\ga\tilde{z},\tilde{w})=\inf_{g\in[\ga]_j}dist(g\tilde{z},\tilde{w})$ (in fact, $\inf_{g\in[\ga]_j}dist(g\tilde{z},\tilde{w})=\min_{g\in[\ga]_j}dist(g\tilde{z},\tilde{w})$ due to the proper discontinuity of deck transformation). With these settings, we can proceed the estimate in (\ref{Variance3}) as follows.

First of all, since $N\geq N^*\geq N_0(M,L,h)$, (\ref{LZThm}) shows that
\begin{align*}
\abs{\Pi_{j,L^N}([\ga]_jz_j,w_j)}_{h^N}\leq\sum_{\ga_j\in\Ga_j}\abs{\tilde{\Pi}_{L^N}(\ga_j\ga\tilde{z},\tilde{w})}_{h^N}.
\end{align*}
If $dist(\ga\tilde{z},\tilde{w})\leq\half\tau_j$, then for any $\ga_j\in\Ga_j\setminus\{1\}$,
\begin{align*}
dist(\ga_j\ga\tilde{z},\tilde{w})\geq dist(\ga_j\ga\tilde{z},\ga\tilde{z})-dist(\ga\tilde{z},\tilde{w})\geq\tau_j-\half\tau_j=\half\tau_j.
\end{align*}
However, if $dist(\ga\tilde{z},\tilde{w})>\half\tau_j$, then $dist(\ga_j\ga\tilde{z},\tilde{w})\geq\half\tau_j$ for all $\ga_j\in\Ga_j$. Similarly as in the proof of Theorem \ref{BK}, we shall have: for all $j\geq J$, which replaces the position of $a_{K,K'}$, there exists $C_{\tilde{U}}>0$(playing the same role as $C_{K,K'}$), such that
\begin{align*}
\abs{\Pi_{j,L^N}([\ga]_jz_j,w_j)}_{h^N}\leq
\begin{cases}
\abs{\tilde{\Pi}_{L^N}(\ga\tilde{z},\tilde{w})}_{h^N}+C_{\tilde{U}}e^{-\si\tau_j}&\text{\quad if }dist(\ga\tilde{z},\tilde{w})\leq\half\tau_j, \\
C_{\tilde{U}}e^{-\si\tau_j}&\text{\quad if }dist(\ga\tilde{z},\tilde{w})>\half\tau_j.
\end{cases}
\end{align*}
As a matter of fact, formulas defining $C_{K,K'}$ and the fact that the replacement for $a_{K,K'}$ is a fixed constant independent of $\tilde{U}$ show that $C_{\ga\tilde{U}}$ can be chosen to be equal to $C_{\tilde{U}}$ for any $\ga\in\Ga$. Thus,
\begin{align}\label{Variance4}
I_j^{-1}\sum_{[\ga]_j\in\Ga/\Ga_j}\abs{\Pi_{j,L^N}([\ga]_jz_j,w_j)}^2_{h^N}
\lesssim C^2_{\tilde{U}}e^{-2\si\tau_j}+I_j^{-1}\sum_{[\ga]_j\in\Ga/\Ga_j,\ dist(\ga\tilde{z},\tilde{w})\leq\half\tau_j}\abs{\tilde{\Pi}_{L^N}(\ga\tilde{z},\tilde{w})}^2_{h^N}.
\end{align}
Let $$A_j(\tilde{z},\tilde{w})=\sum_{[\ga]_j\in\Ga/\Ga_j,\ dist(\ga\tilde{z},\tilde{w})\leq\half\tau_j}\abs{\tilde{\Pi}_{L^N}(\ga\tilde{z},\tilde{w})}^2_{h^N}$$ and denote the coset representatives appearing in the summation of $j$-th step by $\{\ga^{(j)}_1,\dots,\ga^{(j)}_{\kappa_j}\}$, where $1\leq\kappa_j\leq I_j$ since it definitely contains the identity. We observe that $\{\ga^{(j)}_1,\dots,\ga^{(j)}_{\kappa_j}\}\subset\{\ga^{(j+1)}_1,\dots,\ga^{(j+1)}_{\kappa_{j+1}}\}$ because in our convention, representatives of a coset is also a representative of a smaller one and it satisfies the condition for the new summation if it satisfies the previous one. Hence $A_{j+1}(\tilde{z},\tilde{w})$ is obtained from $A_j(\tilde{z},\tilde{w})$ by adding $\De_j=\kappa_{j+1}-\kappa_j$ new terms. We have already shown that those $\ga\in\Ga$ with $d(\ga\tilde{z},\tilde{w})\leq\half\tau_j$ are exactly those representatives appearing in the summation of $A_j$, thus $\half\tau_j<\ga\leq\half\tau_{j+1}$ for $\ga\in\{\ga^{(j+1)}_1,\dots,\ga^{(j+1)}_{\kappa_{j+1}}\}\setminus\{\ga^{(j)}_1,\dots,\ga^{(j)}_{\kappa_j}\}$. Therefore, by Agmon estimates,
\begin{align*}
A_{j+1}(\tilde{z},\tilde{w})\leq\Delta_je^{-\be\sqrt{N}\tau_j}+A_j(\tilde{z},\tilde{w}).
\end{align*}
Denote $\si'=\si'(M,L,h,N)=\be\sqrt{N}>0$. So for all $j\geq J$ and $k\geq1$,
\begin{align*}
\begin{split}
A_{j+k}(\tilde{z},\tilde{w})\leq&\Delta_{j+k-1}e^{-\si'\tau_{j+k-1}}+A_{j+k-1}(\tilde{z},\tilde{w}) \\
\leq&\Delta_{j+k-1}e^{-\si'\tau_{j+k-1}}+\Delta_{j+k-2}e^{-\si'\tau_{j+k-2}}+A_{j+k-2}(\tilde{z},\tilde{w}) \\
\leq&\qquad\qquad\cdots\cdots\cdots \\
\leq&\Delta_{j+k-1}e^{-\si'\tau_{j+k-1}}+\Delta_{j+k-2}e^{-\si'\tau_{j+k-2}}+\cdots+\Delta_je^{-\si'\tau_j}+A_j(\tilde{z},\tilde{w}) \\
\leq&(\Delta_{j+k-1}+\Delta_{j+k-2}+\cdots+\Delta_j)e^{-\si'\tau_j}+A_j(\tilde{z},\tilde{w}) \\
=&(\kappa_{j+k}-\kappa_j)e^{-\si'\tau_j}+A_j(\tilde{z},\tilde{w}) \\
\leq&I_{j+k}e^{-\si'\tau_j}+A_j(\tilde{z},\tilde{w}).
\end{split}
\end{align*}
Hence,
\begin{align*}
0\leq\frac{A_{j+k}(\tilde{z},\tilde{w})}{I_{j+k}}\leq e^{-\si'\tau_j}+\frac{A_j(\tilde{z},\tilde{w})}{I_{j+k}}\leq e^{-\si'\tau_j}+\sup_{\tilde{z}\in\tilde{U}}\abs{\tilde{\Pi}_{L^N}(\tilde{z},\tilde{z})}^2_{h^N}\frac{I_j}{I_{j+k}}\leq e^{-\si'\tau_j}+2^{-k}\sup_{\tilde{z}\in\tilde{U}}\abs{\tilde{\Pi}_{L^N}(\tilde{z},\tilde{z})}^2_{h^N},
\end{align*}
where the last inequality is due to the fact that $\frac{I_{j+k}}{I_j}=[\Ga_{j+2}:\Ga_{j+1}]\cdots[\Ga_{j+k-1}:\Ga_{j+k}]\geq2^k$. Therefore, for any $j\geq0$, we have the uniform estimate
\begin{align}\label{Variance5}
0\leq\frac{A_j(\tilde{z},\tilde{w})}{I_j}=I_j^{-1}\sum_{[\ga]_j\in\Ga/\Ga_j,\ dist(\ga\tilde{z},\tilde{w})\leq\half\tau_j}\abs{\tilde{\Pi}_{L^N}(\ga\tilde{z},\tilde{w})}^2_{h^N}\leq \exp\{-\si'\tau_{\lfloor\frac{j}{2}\rfloor}\}+2^{-\lfloor\frac{j}{2}\rfloor}\sup_{\tilde{z}\in\tilde{U}}\abs{\tilde{\Pi}_{L^N}(\tilde{z},\tilde{z})}^2_{h^N}.
\end{align}
Combining (\ref{Variance3}), (\ref{Variance4}) and (\ref{Variance5}), we get
\begin{align}\label{Variance6}
\begin{split}
&\int_{SH^0(M_j,L^N)}\abs{(\underline{Z}_{s_j}-\pi^{-1}\underline{\Om}_{j,L^N},\psi)}^2\ d\nu_{j,L^N}(s_j) \\
\lesssim&[C^2_{\tilde{U}}\exp\{-2\si\tau_j\}+\exp\{-\si'\tau_{\lfloor\frac{j}{2}\rfloor}\}
+2^{-\lfloor\frac{j}{2}\rfloor}\sup_{\tilde{z}\in\tilde{U}}\abs{\tilde{\Pi}_{L^N}(\tilde{z},\tilde{z})}^2_{h^N}]
\ \norm{\sqrt{-1}\pa\bar{\pa}\psi}_{L^1(M)}^2.
\end{split}
\end{align}
Hence the variance estimate follows. The second statement holds since $\tau_j\to\infty$.
\end{proof}

Any sequence of sections ${\bf{s}}_{L^N}=\{s_j\}_{j=0}^\infty$ with $s_j\in SH^0(M_j,L^N)$ for each $j\geq0$ can be identified as a random element in the probability space $\langle\Pi_{j=0}^\infty SH^0(M_j,L^N),\nu_{L^N}\rangle$, where $\nu_{L^N}$ is the infinite product measure induced by $\nu_{j,L^N}$'s. If we fix an orthonormal basis $\{e_1,\cdots,e_{d_{j,L^N}}\}$ of $H^0(M_j,L^N)$, the set of orthonormal bases $\mathcal{ONB}_{j,L^N}$ of $H^0(M_j,L^N)$ is identical to $U(d_{j,L^N})$, the unitary group of rank $d_{j,L^N}$. Using $\vartheta_{j,L^N}$ to denote the unit mass Haar measure on $\mathcal{ONB}_{j,L^N}$, then $\langle\mathcal{ONB}_{j,L^N},\vartheta_{j,L^N}\rangle$ is a probability space. Similar as above, we may consider a sequence of orthonormal bases ${\bf{S}}_{L^N}=\{(S_{j,1},\dots,S_{j,d_{j,L^N}})\}_{j=0}^\infty\in\langle\Pi_{j=0}^\infty\mathcal{ONB}_{j,L^N},\vartheta_{L^N}\rangle$, where $\vartheta_{L^N}$ is the infinite product measure induced by ${\vartheta_{j,L^N}}$. For all $j\geq0$, denote
$$\lfloor\underline{Z}_{{\bf{s}}_{L^N}}\rfloor_j=\underline{Z}_{s_j}\in\dcal'^{1,1}(M).$$
Then similar as Theorem 1.1 and 1.2 in \cite{SZ1}, we have
\begin{cor}
Assume that $\{\tau_j\}$ defined in (\ref{tau0}) satisfies
\begin{align}\label{Variance7}
\sum_je^{-s\tau_j}<\infty,
\end{align}
for some constant $s>0$. Then there exists $\hat{N}=\hat{N}(M,L,h)>0$ such that for all $N\geq\hat{N}$, 
\begin{enumerate}[i)]
\item $\lfloor\underline{Z}_{{\bf{s}}_{L^N}}\rfloor_j$ converges to $\pi^{-1}\underline{\tilde{\Om}}_{L^N}$ for $\nu_{L^N}$-almost all ${\bf{s}}_{L^N}\in\Pi_{j=0}^\infty SH^0(M_j,L^N)$;
\item For $\vartheta_{L^N}$-almost all ${\bf{S}}_{L^N}=\{(S_{j,1},\dots,S_{j,d_{j,L^N}})\}_{j=0}^{\infty}\in\Pi_{j=0}^\infty\mathcal{ONB}_{j,L^N}$, $$d_{j,L^N}^{-1}\sum_{k=1}^{d_{j,L^N}}\abs{(\underline{Z}_{S_{j,k}}-\underline{\tilde{\Om}}_{L^N},\psi)}^2\to0$$ for any $\psi\in\dcal^{n-1,n-1}(M)$.
Equivalently, for each $j\geq0$ there exists a subset $\La_{j,L^N}\in\{1,\dots,d_{j,L^N}\}$ such that $\frac{\sharp\La_{j,L^N}}{d_{j,L^N}}\to1$ and for any $k\in\La_{j,L^N}$, the sequence $\underline{Z}_{S_{j,k}}$ satisfies
    \begin{align*}
    \lim_{j\to\infty}\underline{Z}_{S_{j,k}}=\pi^{-1}\underline{\tilde{\Om}}_{L^N}.
    \end{align*}
\end{enumerate}
\end{cor}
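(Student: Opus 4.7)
The plan is to convert the quantitative variance estimate of Theorem~\ref{Variance} into almost sure convergence by combining Chebyshev's inequality with the first Borel--Cantelli lemma. By the remark following Theorem~\ref{Variance}, the constant $c=c(M,L,h,N)$ in the variance bound can be made arbitrarily large by enlarging $N$, so we choose $\hat{N}\geq N^*$ large enough that $c\geq s$. With this choice and the hypothesis (\ref{Variance7}), for any fixed $\psi\in\dcal^{n-1,n-1}(M)$ we obtain
\begin{equation*}
\sum_{j}Var\bigl((\underline{Z}_{s_j},\psi)\bigr)\lesssim\norm{\sqrt{-1}\pa\bar{\pa}\psi}_{L^1(M)}^2\sum_{j}\bigl(e^{-c\tau_{\lfloor j/2\rfloor}}+2^{-j/2}\bigr)<\infty,
\end{equation*}
since $\sum_j e^{-c\tau_{\lfloor j/2\rfloor}}\leq 2\sum_k e^{-s\tau_k}<\infty$ (each $k$ contributes to two indices $j$) and $\sum_j 2^{-j/2}<\infty$ trivially.

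For (i), Chebyshev's inequality gives $\nu_{j,L^N}\bigl\{\abs{(\underline{Z}_{s_j}-\pi^{-1}\underline{\Om}_{j,L^N},\psi)}>\ep\bigr\}\leq \ep^{-2}Var\bigl((\underline{Z}_{s_j},\psi)\bigr)$, whose sum in $j$ is finite; the first Borel--Cantelli lemma then yields $(\underline{Z}_{s_j}-\pi^{-1}\underline{\Om}_{j,L^N},\psi)\to 0$ for $\nu_{L^N}$-a.e.\ $\mathbf{s}_{L^N}$. Combined with the $\ccal^\infty$ convergence $\pi^{-1}\underline{\Om}_{j,L^N}\to \pi^{-1}\underline{\tilde{\Om}}_{L^N}$ from Proposition~\ref{BM}, this gives almost sure convergence against any fixed test form. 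To promote scalar convergence to weak convergence as currents, I fix a countable dense family $\{\psi_m\}\subset\dcal^{n-1,n-1}(M)$, intersect the countably many full-measure sets, and invoke the uniform mass bound $\text{mass}(\underline{Z}_{s_j})\leq N\int_M \om_h^n/(n-1)!$, which holds because $Z_{s_j}$ is a positive closed current in the fixed class $Nc_1(L)\in H^{1,1}(M_j,\RR)$ and push-forward by $q_j$ rescales mass by exactly $I_j$; density plus this mass bound extends convergence from $\{\psi_m\}$ to all of $\dcal^{n-1,n-1}(M)$.

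For (ii), unitary invariance of $\vartheta_{j,L^N}$ together with Fubini gives
\begin{equation*}
\int\frac{1}{d_{j,L^N}}\sum_{k=1}^{d_{j,L^N}}\abs{(\underline{Z}_{S_{j,k}}-\pi^{-1}\underline{\tilde{\Om}}_{L^N},\psi)}^2 d\vartheta_{j,L^N}=\int_{SH^0(M_j,L^N)}\abs{(\underline{Z}_{s_j}-\pi^{-1}\underline{\tilde{\Om}}_{L^N},\psi)}^2 d\nu_{j,L^N},
\end{equation*}
since every column of a Haar-random orthonormal basis is itself uniform on $SH^0(M_j,L^N)$. The right-hand side is bounded by $2Var\bigl((\underline{Z}_{s_j},\psi)\bigr)+2\abs{(\pi^{-1}\underline{\Om}_{j,L^N}-\pi^{-1}\underline{\tilde{\Om}}_{L^N},\psi)}^2$, both summable in $j$ (the first by the first paragraph, the second by the effective Bergman stability of Theorem~\ref{BK}). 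A second Chebyshev + Borel--Cantelli argument on $(\Pi_j\mathcal{ONB}_{j,L^N},\vartheta_{L^N})$ then yields the desired convergence $\vartheta_{L^N}$-a.s. The equivalent formulation via density-one subsets $\La_{j,L^N}$ follows by a diagonal argument: apply Chebyshev inside each $j$ to a countable dense family $\{\psi_m\}$ with thresholds $\ep_m\downarrow 0$, extract a density-one subset on which every $\abs{(\underline{Z}_{S_{j,k}}-\pi^{-1}\underline{\tilde{\Om}}_{L^N},\psi_m)}$ is small, and appeal again to the uniform mass bound.

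The main obstacle is not the Chebyshev/Borel--Cantelli chain, which runs mechanically once $c\geq s$, but the upgrade from pointwise-in-$\psi$ convergence to weak convergence as currents. The key ingredient there is the uniform mass bound for positive closed currents in a fixed cohomology class, which reduces the problem to density on a countable family of test forms.
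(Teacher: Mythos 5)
Your proposal is correct and follows essentially the same path as the paper: in both cases the whole argument reduces to the observation that the hypothesis $\sum_j e^{-s\tau_j}<\infty$, combined with the exponential variance bound of Theorem~\ref{Variance} after choosing $\hat N$ so that $c\geq s$, makes $\sum_j Var\bigl((\underline Z_{s_j},\psi)\bigr)<\infty$, after which almost sure scalar convergence against each test form follows and is upgraded by Proposition~\ref{BM}. The only cosmetic difference is that the paper deduces a.s.\ convergence in one step by Tonelli (interchanging $\int$ and $\sum_j$ to get $\sum_j|\cdot|^2<\infty$ $\nu_{L^N}$-a.e.), whereas you route it through Chebyshev and Borel--Cantelli, and the paper simply cites \cite{SZ1}, Theorem~1.2, for part~(ii) where you unwind the unitary-invariance/Fubini computation explicitly; your added remarks on the countable dense family of test forms and the uniform mass bound for $\underline Z_{s_j}$ make explicit a step that the paper leaves implicit, and are accurate.
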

\begin{proof}
i) Take $\hat{N}(M,L,h)\geq N^*(M,L,h)$ such that the $c(M,L,h,N)$ in Proposition \ref{Variance} satisfies $c(M,L,h,\hat{N})\geq s$. Then for any $N\geq\hat{N}$, choosing any $\psi\in\dcal^{n-1,n-1}(M)$, Proposition \ref{Variance} implies that
\begin{align*}
\begin{split}
&\int_{\Pi_{j=0}^\infty SH^0(M_j,L^N)}\sum_{j=0}^\infty\abs{(\lfloor\underline{Z}_{{\bf{s}}_{L^N}}\rfloor_j-\pi^{-1}\underline{\Om}_{j,L^N},\psi)}^2d\nu_{L^N}({\bf{s}}_{L^N}) \\
=&\sum_{j=0}^\infty\int_{SH^0(M_j,L^N)}\abs{(\underline{Z}_{s_j}-\pi^{-1}\underline{\Om}_{j,L^N},\psi)}^2\ d\nu_{j,L^N}(s_j) \\
\lesssim&\sum_{j=0}^\infty[\exp\{-c\tau_{\lfloor\frac{j}{2}\rfloor}\}+2^{-\frac{j}{2}}]\ \norm{\sqrt{-1}\pa\bar{\pa}\psi}_{L^1(M)}^2 \\
\leq&\sum_{j=0}^\infty[\exp\{-s\tau_{\lfloor\frac{j}{2}\rfloor}\}+2^{-\frac{j}{2}}]\ \norm{\sqrt{-1}\pa\bar{\pa}\psi}_{L^1(M)}^2<\infty
\end{split}
\end{align*}
by (\ref{Variance7}). Therefore, $\lfloor\underline{Z}_{{\bf{s}}_{L^N}}\rfloor_j-\pi^{-1}\underline{\Om}_{j,L^N}\to0$ in the sense of currents for $\nu_{L^N}$-almost all ${\bf{s}}_{L^N}=\{s_j\}\in\Pi_{j=0}^\infty SH^0(M_j,L^N)$. Then i) follows from Proposition \ref{BM}.

ii) follows from the same argument in the proof of Theorem 1.2 in \cite{SZ1}.
\end{proof}




\begin{rmk}
(1) Let $\Gamma_j= H_{j,1} \times H_{j,2} \subset \mathbb{Z}^2$ be a discrete lattice and $M_j = \mathbb{C} / \Gamma_j$ be a real two dimensional flat torus. 
If  $\Ga_0\supsetneq\Ga_1\supsetneq\cdots\supsetneq\Ga_j\supsetneq\cdots$ is a tower of normal subgroups and $H_{j, l} \supsetneq H_{j+1, l}$ for all $j,l$,
then $\tau_{j+1}\geq2\tau_j$. Thus, condition (\ref{Variance7}) holds for all $s>0$. \\

(2) Let $M_j$ be a sequence of compact quotients of $SU(n, 1) / S(U(1)\times U(n)) = \mathbb{B}^n$ corresponding to a tower of congruence subgroups $\Gamma(q_j)$ of $G(Q, \cal{L})$ (See section 2.2 \cite{Ye1} for the detailed definition of these subgroups). Then $$\tau_j \geq 2 \log \left\{c \left[ \vol(M_j)^{\frac{2}{n^2+2n}} \right] \right\} \geq 2 \log c + \frac{4j}{n^2+2n} \log 2 + \frac{4}{n^2+2n}\log \vol(M_0) .$$ (cf. Lemma 2.2.1 \cite{Ye1}) Hence for any $s>0,$ condition (\ref{Variance7}) easily follows. \\
\end{rmk}

\section{Appendix}

We include a proof of the Bergman stability as stated in Theorem \ref{BK} by using the standard H\"ormander-Demailly type $L^2$ estimate, for a slightly more general setup (complete noncompact base manifold with bounded geometry). The proof is well known to the experts, which we record  here for its independent interest.

\begin{prop}\label{bsne}
Assume the \kahler\ manifold $(M,\om_h)$ is complete (not necessarily compact), and satisfies the following geometric finite conditions:
\begin{enumerate}[(a)]
\item The sectional curvature of $(M, \om_h)$ is uniformly bounded;
\item The injectivity radius of $(M, \om_h)$ is uniformly bounded from below by $R>0$.
\end{enumerate}
Then there exists some $N_4=N_4(M,L,h)>0$ such that any tower of normal coverings with line bundles $\{(M_j,L^N)\}$ is Bergman stable whenever $N\geq N_4$.
\end{prop}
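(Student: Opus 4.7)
The plan is to prove locally uniform convergence of $\Pi_{j,L^N}(p_j(\cdot),p_j(\cdot))$ to $\tilde\Pi_{L^N}(\cdot,\cdot)$ on $\tilde M\times\tilde M$ by combining the extremal characterization of Bergman kernels with H\"ormander-Demailly $L^2$-estimates, reducing the whole argument to matching asymptotic bounds on the diagonal. Recall that at a point $z\in\tilde M$, $|\tilde\Pi_{L^N}(z,z)|_{h^N}=\sup\{|\tilde s(z)|^2_{h^N}:\ \tilde s\in H^0_{(2)}(\tilde M, L^N),\ \norm{\tilde s}_{L^2}\le 1\}$, and similarly for $\Pi_{j,L^N}$; once I establish the pointwise convergence $|\Pi_{j,L^N}(p_j(z),p_j(z))|_{h^N}\to|\tilde\Pi_{L^N}(z,z)|_{h^N}$ at every $z$, polarization together with Cauchy estimates in normal frames (whose uniform size is guaranteed by the injectivity-radius lower bound) will upgrade this to locally uniform $\ccal^\infty$-convergence on $\tilde M\times\tilde M$, exactly as in Proposition \ref{BKCinfty}.

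For the lower bound $\liminf_j|\Pi_{j,L^N}(p_j(z_0),p_j(z_0))|_{h^N}\ge|\tilde\Pi_{L^N}(z_0, z_0)|_{h^N}$, I would first fix a unit-$L^2$ extremal $\tilde s\in H^0_{(2)}(\tilde M,L^N)$ with $|\tilde s(z_0)|^2_{h^N}=|\tilde\Pi_{L^N}(z_0,z_0)|_{h^N}$. Using that $\tau_j\to\infty$ (the proof of Lemma \ref{tau} adapts under the bounded-geometry hypothesis via proper discontinuity of $\Ga\curvearrowright\tilde M$), I pick a smooth cutoff $\chi_j$ supported in $B(z_0,\tau_j/4)$, identically $1$ near $z_0$, with $|\bar\pa\chi_j|\lesssim\tau_j^{-1}$; since $p_j$ is injective on this ball, $\chi_j\tilde s$ descends to a smooth section $\sigma_j$ on $M_j$ with $\norm{\sigma_j}_{L^2(M_j)}\le 1$ and $\sigma_j(p_j(z_0))=\tilde s(z_0)$. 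Exactly as in the proof of Theorem \ref{BL}, I apply H\"ormander-Demailly's $L^2$-estimate on $M_j$ with a singular weight $\varphi_j$ having a $2n\log|\cdot|$-type pole at $p_j(z_0)$: for $N\ge N_4$ large enough that $NRic(h)+Ric(\om_h)+\tfrac{1}{2}(\sqrt{-1}\pa\bar\pa\varphi_j)_{ac}\ge\om_h$ uniformly---an inequality ensured by $(a)$ (uniform sectional curvature bound, hence uniform bound on $Ric(\om_h)$) and by the fact that the bump $\varphi_j$ is constructed in normal coordinates of a uniform size allowed by $(b)$---the equation $\bar\pa u_j=\bar\pa\sigma_j$ admits a solution $u_j$ with $u_j(p_j(z_0))=0$ and $\norm{u_j}^2_{L^2(M_j)}\lesssim\norm{\bar\pa\sigma_j}^2_{L^2(M_j)}=o(1)$ (by $|\bar\pa\chi_j|\lesssim\tau_j^{-1}$ and global $L^2$-integrability of $\tilde s$). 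Then $s_j:=\sigma_j-u_j\in H^0(M_j,L^N)$ satisfies $s_j(p_j(z_0))=\tilde s(z_0)$ and $\norm{s_j}_{L^2(M_j)}\le 1+o(1)$, yielding the lower bound.

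For the upper bound, conversely take a unit-$L^2$ extremal $s_j\in H^0(M_j,L^N)$ with $|s_j(p_j(z_0))|^2_{h^N}=|\Pi_{j,L^N}(p_j(z_0),p_j(z_0))|_{h^N}$; the pullback $p_j^*s_j$ is $\Ga_j$-invariant and not globally $L^2$ on $\tilde M$, yet the injectivity of $p_j$ on $B(z_0,\tau_j/4)$ forces $\norm{\chi_j\, p_j^*s_j}_{L^2(\tilde M)}\le 1$. Now I apply the same H\"ormander-Demailly estimate on the complete K\"ahler manifold $\tilde M$ (bounded-geometry positivity again) with an analogous singular weight at $z_0$ to correct $\chi_j\, p_j^*s_j$ to a holomorphic $L^2$-section $\tilde s_j\in H^0_{(2)}(\tilde M,L^N)$ satisfying $\tilde s_j(z_0)=s_j(p_j(z_0))$ and $\norm{\tilde s_j}_{L^2(\tilde M)}\le 1+o(1)$, producing the matching upper bound by extremality of $\tilde\Pi_{L^N}(z_0, z_0)$.

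The main obstacle is arranging every constant above to be independent of $j$ and of $z_0$, which is precisely what the bounded-geometry hypotheses $(a)$ and $(b)$ are designed for: they yield uniform normal coordinate charts of definite size, a uniform upper bound on $|Ric(\om_h)|$, uniform control of the cutoff and its gradient, and uniform positivity of $NRic(h)+Ric(\om_h)+\tfrac{1}{2}(\sqrt{-1}\pa\bar\pa\varphi_j)_{ac}$, thereby fixing the choice $N_4=N_4(M,L,h)$ independently of $j$ and making the Cauchy estimates used in the polarization step uniform as well. Without these hypotheses, either the $L^2$-constants or the Cauchy bounds could blow up as $j$ or $z_0$ varies, and no single $N_4$ would suffice.
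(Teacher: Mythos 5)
Your argument is correct, and it breaks into the same two pointwise inequalities the paper proves (denoted (i) the $\limsup$ upper bound and (ii) the $\liminf$ lower bound, followed by a polarization/normal-family upgrade that both the paper and your writeup treat summarily). On the lower bound you and the paper follow the same route: cut off the coherent state $S_x=\tilde\Pi_{L^N}(\cdot,x)/\sqrt{\tilde\Pi_{L^N}(x,x)}$ inside $B(x,\tau_j(x)/3)$, push down through $p_j$, build a uniform singular weight with a $2n\log$-pole at $p_j(x)$ using the injectivity-radius bound, and correct with H\"ormander--Demailly. The only divergence there is cosmetic: you bound $\norm{\bar\pa\sigma_j}_{L^2}^2\lesssim\tau_j^{-2}\int_{\mathrm{annulus}}|\tilde s|^2\le\tau_j^{-2}$ directly from $\norm{\tilde s}_{L^2}=1$, whereas the paper estimates the pointwise size of $S_x$ on the annulus by the Agmon estimate and then volume-counts with Bishop--Gromov; your variant is simpler and suffices since only $o(1)$ is needed. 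On the upper bound, however, you genuinely depart from the paper: the paper disposes of (i) by citing the standard normal-family/Montel argument (as in To, Chen--Fu, Yeung) and omits the proof, while you run the mirror-image $L^2$-correction scheme on $\tilde M$, cutting off $p_j^*s_j$ and solving $\bar\pa$ on the complete noncompact $\tilde M$ with the same kind of singular weight and uniform curvature positivity. This unifies the two halves under a single mechanism and avoids subsequence extraction, at the cost of invoking the complete-manifold $\bar\pa$-solver a second time (which the paper reserves for Theorem \ref{BL}); it also implicitly requires checking that the weight is bounded above and that its support misses the support of $\bar\pa(\chi_j p_j^*s_j)$, both of which hold once $\tau_j(z_0)>4R$, and that $\tau_j(z_0)\to\infty$---which, as you note, follows from proper discontinuity of $\Gamma\curvearrowright\tilde M$ together with $\bigcap_j\Gamma_j=\{1\}$, without needing co-compactness (the paper cites DeGeorge--Wallach for the locally uniform version). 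Both approaches are sound; the paper's is arguably shorter because the Montel step is soft, while yours is more self-contained and symmetric.
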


\begin{proof}

We essentially follow the argument of \cite{To} (see also \cite{CF}\cite{Ye3}) to break the argument into two parts:
\begin{enumerate}[(i)]
\item $\di\limsup_{j\to\infty}\abs{\Pi_{j,L^N}\left(p_j(z),p_j(z)\right)}_{h^N}\leq\abs{\tilde{\Pi}_{L^N}(z,z)}_{h^N}$ for any $z\in\tilde M$ and any $N\geq1$;
\item $\di\liminf_{j\to\infty}\abs{\Pi_{j,L^N}\left(p_j(z),p_j(z)\right)}_{h^N}\geq\abs{\tilde{\Pi}_{L^N}(z,z)}_{h^N}$ for any $z\in\tilde M$ and any $N\geq N_4$.
    \end{enumerate}

Part (i) follows by a straightforward normal family argument (cf. \cite{To} \cite{CF}\cite{Ye3}) which we will omit here,
while part (ii) is a combination of H\"ormander's $L^2$-estimate and Agmon estimate. 
For any $z\in\tilde{M}$, define $\tau_j(z)=\inf\left\{dist(z,\gamma_j z):\ \gamma_j\in\Gamma_j\setminus\{1\}\right\}$. 
Then $p_j|_{B(z,\half\tau_j(z))}$ is one-to-one and $p_j|_{B(z,\half\tau_j(z))}:B(z,\half\tau_j(z))\to p_j\left(B(z,\half\tau_j(z))\right)$ is a biholomorphism. It is proved in \cite{DW} that $\tau_j(z) \rightarrow \infty$ uniformly on compact subsets of $\tilde M$, as $j \rightarrow \infty$.

\medskip

Now fix a point $x \in\tilde{M}$. We only need to show the case that $\tilde{\Pi}_{L^N}(x,x)\neq0$. Let $\rho(\cdot)=dist(\cdot, x)\in\ccal^0(\tilde{M})$ and $x_j=p_j(x) \in M_j$ for any $j \geq 0$.

\medskip

\textbf{Step 1}: Define sections $\{T_j\in\Gamma(M_j,L^N)\}$.

Consider the coherent state $$S_{x}(y):=\frac{\tilde{\Pi}_{L^N}(y, x)}{\sqrt{\tilde{\Pi}_{L^N}(x, x)}}.$$ Then $S_{x} \in SH^0(\tilde{M},L^N)$ and $\abs{S_{x}(x)}_{h^N}^2=\abs{\tilde{\Pi}_{L^N}(x, x)}_{h^N}$. For any $j\geq0$, let $\tilde{T}_j(y)=\chi_j(\rho(y))S_{x}(y)\in\Gamma(\tilde{M},L^N)$, where the nonincreasing function $\chi_j\in\ccal_c^{\infty}([0,\infty),\R^+)$ satisfies $\chi_j(r)=1$ for $0\leq r\leq\frac{1}{4}\tau_j(x)$, $\chi_j(r)=0$ for $r\geq\frac{1}{3}\tau_j(x)$ and $\norm{\chi'_j}_{\infty}=O(\tau_j(x)^{-1})$. Since $p_j|_{B(x,\half\tau_j(x))}$ is one-to-one, the sections $\{T_j\in\Gamma(M_j,L^N)\}$ are defined as follows:
\begin{align*}
T_j(z)=
\begin{cases}
\tilde{T}_j\left((p_j|_{B(x,\half\tau_j(x))})^{-1}(z)\right) &\text{if} ~z\in p_j(B(x,\half\tau_j(x))), \\
0 & \text{otherwise}.
\end{cases}
\end{align*}

\medskip
\textbf{Step 2}: Construct potential functions $\{\phi_j\}$ following \cite{Ye2}.

The construction is due to \cite{Ye2}.
Since the injectivity radius of the base manifold $M=M_0$ is bounded from below by $R>0$, then the injectivity radius of $M_j$ at $x_j$ is at least $R$ since injectivity radius is nondecreasing along the tower of coverings. Let $\delta\in\ccal^{\infty}_c([0,\infty),\R^+)$ (fixed and independent of $j$) be a nonincreasing cut-off function satisfying $\delta(r)=1$ for $0\leq r\leq \frac{1}{2}R$ and $\delta(r)=0$ for $r\geq R$. In addition, one can pick up $\delta(r)$ so that $-\frac{2+1}{R} \leq \delta'(r) \leq 0$ and $\left|\delta''(r) \right| \leq \frac{4(2+1)}{r^2}$. 
As $\tau_j(x) \rightarrow \infty$ as $j \rightarrow \infty$, by choosing $j$ sufficiently large, we can always assume that $\tau_j(x) > 4R$.
Define a function on $\tilde M$ by 
$$\phi(y)= \log \left(\frac{4 \rho^2(y)}{R^2} \right) \times \delta(\rho(y)).$$
Then the potential function $\phi_j$ on $M_j$ is defined by

\begin{align*}
\phi_j(z)=
\begin{cases}
n \phi \left((p_j|_{B(x,\half\tau_j(x))})^{-1}(z)\right) &\text{if} ~z\in p_j(B(x,\half\tau_j(x))), \\
0 & \text{otherwise}.
\end{cases}
\end{align*}

As the sectional curvature of $M_j$ is uniformly bounded independent of $j$, by the Hessian comparison theorem \cite{GW}, as shown in \cite{Ye2}, one can control the complex Hessian of $\phi_j$ to have $$\frac{\sqrt{-1}}{2} \partial\bar\partial \phi_j \geq -K \om_h,$$ where the positive constant $K=K(M,L,h)$ is independent of $j$ and the base point $x \in \tilde M$.

\medskip

\textbf{Step 3}: Apply H\"ormander's theorem to solve $\bar{\pa}$-equation $\bar{\pa}T'_j=\bar{\pa}T_j$.

 There exists $N_4'=N_4'(M,L,h) 
>0$, such that  
\begin{align}\label{Curvbd}
NRic(h)+\frac{\sqrt{-1}}{2}\pa\bar{\pa}\phi_j+Ric(\om_h) 
\geq \om_h  ~\text{ for}~  N \geq N_4' .
\end{align}
For $N\geq N_4'$ and sufficiently large $j$, we consider the line bundle $(L^N,h^Ne^{-\phi_j})\to (M_j,dV_h)$. By (\ref{Curvbd}), we apply H\"ormander's $L^2$-estimate for the $\bar\partial$-equation (cf. \cite{Dem} Theorem 5.1). 
There exists $T'_j\in L^2(M_j,(L^N,h^Ne^{-\phi_j}))$, such that $\bar{\pa}T'_j=\bar{\pa}T_j$ with
\begin{align}\label{Hor}
\norm{T'_j}^2_{L^2(h^Ne^{-\phi_j})}=\int_{M_j}\abs{T'_j}^2_{h^N}e^{-\phi_j}dV_h
\leq\int_{M_j}\abs{\bar{\pa}T_j}^2_{(h^N,\om_h)}e^{-\phi_j}dV_h=\norm{\bar{\pa}T_j}^2_{L^2(h^Ne^{-\phi_j})}.
\end{align}
Note that $\bar{\pa} T_j$ is supported in $p_j\left(\bar{B}(x,\frac{1}{3}\tau_j(x))\setminus B(x,\frac{1}{4}\tau_j(x))\right)=p_j(\bar{B}(x,\frac{1}{3}\tau_j(x)))\setminus p_j(B(x,\frac{1}{4}\tau_j(x)))$. For any $z\in p_j(B(x,\half\tau_j(x)))$,
\begin{align*}
\begin{split}
\bar{\pa}T_j(z)&=\bar{\pa}\left[\chi_j\left(\rho\circ(p_j|_{B(x,\half\tau_j(x))})^{-1}(z)\right)S_x\left((p_j|_{B(x,\half\tau_j(x))})^{-1}(z)\right)\right] \\
&=\chi'_j\left(\rho\circ(p_j|_{B(x,\half\tau_j(x))})^{-1}(z)\right)\bar{\pa}\rho\left((p_j|_{B(x,\half\tau_j(x))})^{-1}(z)\right)S_{x}\left((p_j|_{B(x,\half\tau_j(x))})^{-1}(z)\right).
\end{split}
\end{align*}
The distance function $\rho$ is 
is differentiable almost everywhere (away from the cut-locus). Moreover, we have $\abs{\bar{\pa}\rho}_{\om_h}^2=\half\abs{d\rho}_{\om_h}^2=\half$ almost everywhere. Hence for almost every $z\in p_j(B(x,\half\tau_j(x)))$,
\begin{align}\label{Hor0}
\abs{\bar{\pa}T_j(z)}^2_{(h^N,\om_h)}=
\half\Abs{\chi'_j\left((p_j|_{B(x,\half\tau_j(x))})^{-1}(z)\right)}^2\
\Abs{S_{x}\left((p_j|_{B(x,\half\tau_j(x))})^{-1}(z)\right)}^2_{h^N}.
\end{align}
From the definition of $\chi_j$,
\begin{align}\label{Hor1}
\left| \chi'_j\left((p_j|_{B(x,\half\tau_j(x))})^{-1}(z)\right) \right|^2\lesssim\tau_j(x)^{-2}.
\end{align}
Applying Agmon estimate on the support of $\bar{\pa}T_j$, when $N\geq N_0$,
\begin{align}\label{Hor2}
\Abs{S_x\left((p_j|_{B(x,\half\tau_j(x))})^{-1}(z)\right)}^2_{h^N}\lesssim e^{-2\be\sqrt{N}\frac{1}{4}\tau_j(x)}=e^{-\half\be\sqrt{N}\tau_j(x)},
\end{align}
provided that $j\geq0$ is large enough to satisfy $\frac{1}{4}\tau_j(x) \geq1$. 
Combining (\ref{Hor0}),(\ref{Hor1}) and (\ref{Hor2}), 
we have that for $j$ large enough, the following holds almost everywhere in $p_j(B(x,\half\tau_j(x)))$:
\begin{align}\label{Hor4}
\abs{\bar{\pa}T_j}^2_{(h^N,\om_h)}\lesssim\tau_j(x)^{-2}e^{-\half\be\sqrt{N}\tau_j(x)}.
\end{align}
As $\bar{\pa}T_j$ is supported in $p_j(\bar{B}(x,\frac{1}{3}\tau_j(x)))\setminus p_j(B(x,\frac{1}{4}\tau_j(x)))$ and $\phi_j$ is supported in $p_j(B(x,R))$, $\phi_j=0$ in the supported of $\bar{\pa}T_j$ for $j$ large enough. Therefore, for such $j$, by (\ref{Hor4}),
\begin{align*}
\begin{split}
\norm{\bar{\pa}T_j}^2_{L^2(h^Ne^{-\phi_j})}=&\int_{M_j}\abs{\bar{\pa}T_j}^2_{(h^N,\om_h)}e^{-\phi_j}dV_h \\
\lesssim&\tau_j(x)^{-2}e^{-\half\be\sqrt{N}\tau_j(x)}\int_{p_j(B(x,\half\tau_j(x)))}\ dV_h \\
=&\tau_j(x)^{-2}e^{-\half\be\sqrt{N}\tau_j(x)}V({B(x,\half\tau_j(x))}).
\end{split}
\end{align*}
Since the Ricci curvature of $\tilde{M}$ has a lower bound, by Bishop volume comparison theorem, $V(B(x,\half\tau_j(x)))\ dV_h$ grows at most exponentially. In other words, there exists $C=C(M,L,h)>0$ such that $V(B(x,\half\tau_j(x)))\leq e^{\frac{C}{2}\tau_j(x)}$. Hence
\begin{align*}
\norm{\bar{\pa}T_j}^2_{L^2(h^Ne^{-\phi_j})}\lesssim\tau_j(x)^{-2}e^{-\half\be\sqrt{N}\tau_j(x)}e^{\frac{C}{2}\tau_j(x)}=\tau_j(x)^{-2}e^{-\half(\be\sqrt{N}-C)\tau_j(x)}.
\end{align*}
Denote $N_4''=N_4''(M,L,h)=\max\{\lfloor\left(\frac{C+2}{\be}\right)^2\rfloor+1,N_0\}$.
Then for $N\geq N_4''$, $\be\sqrt{N}-C>2$, 
\begin{align}\label{Hor5}
\norm{\bar{\pa}T_j}^2_{L^2(h^Ne^{-\phi_j})}\lesssim\tau_j(x)^{-2}e^{-\tau_j(x)}.
\end{align}
Take $N_4=\max\{N_4',N_4''\}$. 
By the $L^2$-estimate (\ref{Hor}), for $N\geq N_4$ and $j$ large enough, then
\begin{align*}
\norm{T'_j}^2_{L^2(h^N e^{-\phi_j})}=\int_{M_j}\abs{T'_j}^2_{h^N}e^{-\phi_j}dV_h\lesssim\tau_j(x)^{-2}e^{-\tau_j(x)} < \infty.
\end{align*}
As $\phi_j\leq\log4$, $e^{-\phi_j}\geq\frac{1}{4}$, then
\begin{align}\label{Hor6}
\norm{T'_j}^2_{L^2(h^N)}\lesssim\norm{T'_j}^2_{L^2(h^Ne^{-\phi_j})}\lesssim\tau_j(x)^{-2}e^{-\tau_j(x)}\rightarrow 0, ~\text{as}~j \rightarrow \infty.
\end{align}
\textbf{Step 4}: Conclusion.

Let $S_j:=T_j-T'_j$. Then $S_j$ satisfies following properties for $N \geq N_4$ and for $j$ sufficiently large:

\begin{enumerate}[(1)]
\item $\bar{\pa}S_j=\bar{\pa}T_j-\bar{\pa}T'_j=0$. This implies $ S_j\in H^0(M_j,L^N)$ and thus $T'_j\in \Ga(M_j,L^N)$.

\item Since $e^{-\phi_j(z)}\sim\left(\rho\circ(p_j|_{B(x,\half\tau_j(x))})^{-1}(z)\right)^{-2n}=dist(z,x_j)^{-2n}$ near $x_j$, $\abs{T'_j}^2_{h^N}e^{-\phi_j}$ is not locally integrable unless we have $T'_j(x_j)=0$. Therefore $S_j(x_j)=T_j(x_j)-T'_j(x_j)=T_j(x_j)$, which implies that $\abs{S_j(x_j)}^2_{h^N}=\abs{T_j(x_j)}^2_{h^N}=\abs{S_{x}(x)}^2_{h^N}=\abs{\tilde{\Pi}_{L^N}(x, x)}_{h^N}>0$.

\item
\begin{align*}
\begin{split}
0<\norm{S_j}_{L^2(h^N)}=&\norm{T_j-T'_j}_{L^2(h^N)}\leq\norm{T_j}_{L^2(h^N)}+\norm{T'_j}_{L^2(h^N)} \\
\leq&\norm{S_{x}}_{L^2(h^N)}+\norm{T'_j}_{L^2(h^N)} \\
=&1+\norm{T'_j}_{L^2(h^N)}.
\end{split}
\end{align*}
\end{enumerate}

Define $F_j=\frac{S_j}{\norm{S_j}_{L^2(h^N)}}\in SH^0(M_j,L^N)$. Therefore, by the extremal property of Bergman kernel,
\begin{align*}
\abs{\Pi_{j,L^N}\left(p_j(x),p_j(x)\right)}_{h^N}=\abs{\Pi_{j,L^N}(x_j, x_j)}_{h^N}\geq\abs{F_j(x_j)}^2_{h^N}
=\frac{\abs{S_j(x_j)}^2_{h^N}}{\norm{S_j}^2_{L^2(h^N)}}\geq\frac{\abs{\tilde{\Pi}_{L^N}(x, x)}_{h^N}}{(1+\norm{T'_j}_{L^2(h^N)})^2}.
\end{align*}
By (\ref{Hor6}), for $N\geq N_4$,
\begin{align*}
\liminf_{j\to\infty}\abs{\Pi_{j,L^N}\left(p_j(x),p_j(x)\right)}_{h^N}\geq\abs{\tilde{\Pi}_{L^N}(x,x)}_{h^N}.
\end{align*}
Hence part (ii) is proved as $x \in \tilde M$ is arbitrary.
\end{proof}


\bigskip

\noindent Yuan Yuan, yyuan05@syr.edu, \\
Department of Mathematics, Syracuse University, Syracuse, NY 13205, USA.

\medskip

\noindent Junyan Zhu, jyzhu@math.jhu.edu, \\
Department of Mathematics, Johns Hopkins University, Baltimore, MD 21218, USA.


\end{document}